\documentclass[11pt]{article}
\usepackage[margin=1in]{geometry}
\usepackage{amsmath,amssymb,amsthm,mathtools}
\usepackage{tikz}
\usetikzlibrary{shapes.geometric}
\usepackage{enumitem}
\usepackage{hyperref}
\usepackage[expansion=false]{microtype}
\hypersetup{
  colorlinks=true,
  linkcolor=blue,
  citecolor=blue,
  urlcolor=blue,
  pdftitle={A Polynomial Recovery Criterion for Forced Commutativity in the Matrix Square-Root Fiber of a Square-Free Cubic Polynomial},
  pdfauthor={Oleg Kiriukhin}
}

\theoremstyle{plain}
\newtheorem{theorem}{Theorem}[section]
\newtheorem{lemma}[theorem]{Lemma}
\newtheorem{proposition}[theorem]{Proposition}

\theoremstyle{definition}
\newtheorem{definition}[theorem]{Definition}
\newtheorem{example}[theorem]{Example}

\theoremstyle{remark}
\newtheorem{remark}[theorem]{Remark}

\DeclareMathOperator{\Jord}{Jord}
\DeclareMathOperator{\im}{im}
\DeclareMathOperator{\Spec}{Spec}

\title{A Polynomial Recovery Criterion for Forced Commutativity in the Matrix Square-Root Fiber of a Square-Free Cubic Polynomial}
\author{%
Oleg Kiriukhin\\
City University of Hong Kong\\
\texttt{okiriukh@cityu.edu.hk}%
}
\date{\today}

\begin{document}
\maketitle

\begin{abstract}
Let $k$ be an algebraically closed field of characteristic different from $2$ and $3$, and let
$f(t)=t^3+at+b\in k[t]$ be square-free.  For $X\in M_m(k)$ set $A=f(X)$.  This paper studies the
full matrix equation $Y^2=A$, without imposing $[X,Y]=0$.  Existence is the classical square-root
problem for the fixed matrix $A$.  The only obstruction is the nilpotent square-root pairing
criterion on the zero-primary component of $A$.  The main result is a fiberwise commutativity
classification: if the fiber $\{Y\in M_m(k):Y^2=A\}$ is nonempty, then every such $Y$ commutes with
$X$ if and only if $X\in k[A]$.  The forward direction restricts the classical centralizer
criterion of R.\,C. Thompson \cite{Thompson1969} to the square-root fiber.  The new content is the
converse: when $X\notin k[A]$, an explicit square root $Y$ of $A$ with $[X,Y]\ne 0$ is exhibited.
The witnesses arise from nonzero spectral collision under $f$, critical nilpotent collapse, or
a multicolor zero-primary fiber.
\end{abstract}

\noindent\textbf{Keywords.} Matrix equations, matrix square roots, centralizers, polynomial
recovery, Jordan form, noncommuting matrix points, noncommutative matrix equations.

\medskip
\noindent\textbf{2020 Mathematics Subject Classification.} Primary 15A24. Secondary 15A16, 15A21, 14H52, 16G20.

\section{Introduction}\label{sec:introduction}
Let $k$ be an algebraically closed field with $\operatorname{char} k\ne 2,3$, let $f(t)=t^3+at+b\in
k[t]$ be square-free, and fix $X\in M_m(k)$ with $A=f(X)$.  This paper studies when $X$ is
\emph{polynomially recoverable} from its $f$-image, that is, when there exists $g\in k[t]$ with
$X=g(A)$, equivalently $X\in k[A]$.  The main theorem characterizes polynomial recovery in
fiberwise terms: assuming the matrix square-root fiber $\{Y:Y^2=A\}$ is nonempty, polynomial
recovery is equivalent to the condition that every $Y$ in this fiber commutes with $X$.  The
background question is thus a polynomial-recovery question for the map $X\mapsto f(X)$ on
$M_m(k)$, expressed combinatorially through the geometry of the square-root fiber.

The scalar equation $y^2=f(x)$ is a short-Weierstrass equation for a smooth elliptic curve after
adjoining the point at infinity, and provides the formal motivation for studying the matrix
equation $Y^2=f(X)$, but no commutativity is imposed and the elliptic geometry plays no role in
the statement or proof. Equivalently, a pair $(X,Y)\in M_m(k)^2$ with $Y^2=f(X)$ is a finite-dimensional representation of the noncommutative $k$-algebra $\nu_f=k\langle x,y\rangle/(y^2-f(x))$, and the main theorem classifies the forced-commutativity locus inside the representation variety $\operatorname{Rep}_m(\nu_f)$.

The classification rests on the following structural observation.  If $A=f(X)$ and $Y^2=A$, then $Y$ commutes with
$A$, hence lies in $Z(A)$, while the desired conclusion is membership in $Z(X)$.  The main theorem
identifies the exact point at which these two requirements coincide on the square-root fiber: after
existence is known, all square roots commute with $X$ precisely when $X\in k[A]$.

The proof separates two issues.  Existence of a square root is the classical problem for the fixed
matrix $A$, with the only obstruction occurring on the zero-primary component.  Forced commutativity
is a different, relative question: it depends on whether the map $X\mapsto f(X)$ has lost spectral
or nilpotent information needed to recover $X$ from $f(X)$.

\noindent\textbf{Main theorem.}  Let $X\in M_m(k)$ and set $A=f(X)$.  Assume that the fiber $\{Y\in
M_m(k):Y^2=A\}$ is nonempty.  Then every $Y$ satisfying $Y^2=A$ commutes with $X$ if and only if
$X\in k[A]$.\footnote{Throughout the paper, ``square root'' means a matrix $Y$ with $Y^2=A$.  This
is the matrix-equation sense studied for example in
\cite{HighamFunctions,JohnsonOkuboReams,BjorkHammarling}, and is distinct from a matrix
factorization $\Phi\Psi=AI$ in the sense of \cite{EisenbudMF}.}  Equivalently, if $X\notin k[A]$,
then the fiber contains a square root $Y$ with $[X,Y]\ne 0$.

\begin{definition}[Polynomial recovery]\label{def:polynomial-recovery}
$X$ is \emph{polynomially recoverable} from $A=f(X)$, equivalently \emph{polynomial recovery holds}
for the pair $(X,f)$, if $X\in k[A]$.  Equivalently, the
evaluation map $k[t]\to M_m(k)$ sending $t\mapsto A$ has image $k[A]$ containing $X$, that is,
there exists $g\in k[t]$ with $X=g(A)$.\footnote{The condition $X\in k[A]=k[f(X)]$ is the
centralizer-equality condition $Z(A)=Z(X)$ in the classical sense of R.\,C. Thompson
\cite{Thompson1969}, restricted to the special case $A=f(X)$.  The terminology ``polynomial
recovery'' is introduced here to emphasize the explicit inversion $X=g(A)$ and to distinguish
the relative question of recovery under the map $X\mapsto f(X)$ from the absolute square-root
existence question for the fixed matrix $A$.}
\end{definition}

\noindent\textbf{Relation to the classical centralizer criterion.}  The forward direction of the
main theorem follows from the classical result of R.\,C. Thompson \cite{Thompson1969}, which
establishes that the centralizer $Z(A)$ is contained in $Z(X)$ if and only if $X\in k[A]$.
Applied to a square root $Y$ of $A=f(X)$, Lemma~\ref{lem:centralizer-containment} gives
$Y\in Z(A)$, and Thompson's theorem then forces $Y\in Z(X)$ when $X\in k[A]$.  The new content of
this paper is the converse on the square-root fiber: when $X\notin k[A]$, an explicit square root
$Y$ of $A$ with $[X,Y]\ne 0$ is exhibited.  Thompson's theorem produces some element of
$Z(A)\setminus Z(X)$, but in general this element is not a square root of $A$.  The constructive
refinement carried out below is the contribution.

\noindent\textbf{Proof strategy.}  Polynomial recovery can fail only through spectral collision
under $f$ or through a nontrivial nilpotent block at a critical point of $f$.  A zero-valued
collision is treated by retaining the root-color data in the singular fiber.  A nonzero collision
or a critical collapse produces an idempotent in $Z(A)$ which does not commute with $X$, and an
idempotent twist then gives the required noncommuting square root.  The final classification is
obtained by combining these constructions with the square-root existence criterion.

\noindent\textbf{Organization.}  Section~\ref{sec:setup} fixes notation and records the basic
centralizer containment.  Section~\ref{sec:scope-main-results} states the classification in its
fiberwise form.  Sections~\ref{sec:polynomial-square-roots-invertible}--\ref{sec:partition-transforms}
collect the square-root, idempotent, and Jordan-partition tools.
Section~\ref{sec:colored-singular-fibers} constructs
the noncommuting witnesses.  Section~\ref{sec:global-classification-after-existence} proves the
global theorem after existence.  When $f$ has no critical points or collision eigenvalues among
the eigenvalues of $X$, polynomial recovery holds and the classification is immediate.
The regular case is therefore subsumed by the main theorem.  The final sections give further directions, situate
the result in the literature, and record illustrative examples.

\section{Setup}\label{sec:setup}
Throughout the paper, $k$ is an algebraically closed field with $\operatorname{char} k\ne 2,3$, and
\[
        f(t)=t^3+at+b\in k[t]
\]
is square-free, equivalently $4a^3+27b^2\ne 0$.  For a matrix $T$, write $Z(T)$ for its centralizer
in the full matrix algebra and $k[T]$ for the subalgebra generated by $T$.  All commutators are
written as $[S,T]=ST-TS$.  For an eigenvalue $\lambda\in\Spec(X)$, write $V_\lambda$ for the
generalized $\lambda$-eigenspace of $X$ and $N_\lambda=(X-\lambda I)|_{V_\lambda}$ for the nilpotent
part of $X$ on $V_\lambda$, so that $X|_{V_\lambda}=\lambda I+N_\lambda$.  The full primary
decomposition is $V=\bigoplus_{\lambda\in\Spec(X)}V_\lambda$.

The word elliptic refers only to the underlying scalar short-Weierstrass equation $y^2=f(x)$.  After
adjoining the point at infinity this scalar equation defines a smooth elliptic curve, whereas this
paper studies the associated matrix equation without imposing commutativity of the two
matrix coordinates.

For $m\ge 1$ write $\mathcal M_m(f)=\{(X,Y)\in M_m(k)^2:Y^2=f(X)\}$ for the matrix solution
locus, $\mathcal C_m(f)=\{(X,Y)\in\mathcal M_m(f):[X,Y]=0\}$ for its commuting sublocus, and
$\mathcal{NC}_m(f)=\mathcal M_m(f)\setminus\mathcal C_m(f)$ for the noncommuting locus.  In the
representation-variety language, $\mathcal M_m(f)$ is the variety of $m$-dimensional
representations of the noncommutative $k$-algebra $\nu_f=k\langle x,y\rangle/(y^2-f(x))$, and
$\mathcal C_m(f)$ is its abelianization, the variety of $m$-dimensional representations of the
commutative coordinate ring $k[x,y]/(y^2-f(x))$ of the affine elliptic curve.

\begin{lemma}[Centralizer containment]\label{lem:centralizer-containment}
If $(X,Y)\in\mathcal M_m(f)$ and $A=f(X)$, then $Y\in Z(A)$.
\end{lemma}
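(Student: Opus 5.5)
The plan is a one-line computation: use the defining relation $Y^2=f(X)=A$ and the fact that any matrix commutes with its own square. Since $(X,Y)\in\mathcal M_m(f)$, we have $A=Y^2$, so
\[
YA=Y\cdot Y^2=Y^3=Y^2\cdot Y=AY,
\]
by associativity of matrix multiplication. Hence $[Y,A]=0$, which is exactly $Y\in Z(A)$.

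The key point is that the argument never uses how $A$ relates to $X$, beyond $A=Y^2$. In particular, $[X,Y]=0$ is not needed; that is why the lemma applies on the full noncommutative locus $\mathcal M_m(f)$ and not only on $\mathcal C_m(f)$. Equivalently, $Y$ commutes with every element of $k[Y]$, and $A=Y^2\in k[Y]$.

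No step presents a real obstacle. The only care needed is to use $A=Y^2$ and not $A=f(X)$ when checking commutation with $Y$, since $Y$ need not commute with $X$. I would also record the immediate consequence used later: $Y$ commutes with every element of $k[A]$, so $Y\in Z(k[A])$.
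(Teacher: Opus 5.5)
Your proof is correct and is exactly the paper's argument: from $A=Y^2$ you get $YA=Y^3=AY$ by associativity, without using any relation between $Y$ and $X$. Your added observation that $Y$ then commutes with all of $k[A]$ is also correct.
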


\begin{proof}
Since $A=Y^2$,
\[
 YA=YY^2=Y^3=Y^2Y=AY.
\]
Thus $Y$ commutes with $A$.
\end{proof}

\begin{remark}[Link to the centralizer-equivalence literature]\label{rem:centralizer-equivalence-link}
Let $X\in M_m(k)$ and $A=f(X)$.  Thompson's theorem \cite{Thompson1969} states that over an
algebraically closed field, $Z(A)\subseteq Z(X)$ if and only if $X\in k[A]$, and subsequent work
has refined this polynomial-equivalence relation in various directions
\cite{XiZhangCentralizer,ZhangZhuPolynomialEquivalence}.  Combined with
Lemma~\ref{lem:centralizer-containment}, Thompson's theorem gives the forward direction of the
main theorem directly: if $X\in k[A]$ then every $Y$ with $Y^2=A$ lies in $Z(A)\subseteq Z(X)$ and
hence commutes with $X$.  The contribution of this paper is the converse over the
square-root fiber: when $X\notin k[A]$ and the fiber $\{Y:Y^2=A\}$ is nonempty, an explicit
square root $Y$ with $[X,Y]\ne 0$ is constructed.
\end{remark}

\medskip
\noindent\textbf{Standing hypotheses and notation.}  Throughout the remainder of the paper, $X\in
M_m(k)$ is fixed and $A=f(X)$ is its $f$-image.  The standing assumptions $\operatorname{char}
k\ne 2,3$, $k=\bar k$, and $4a^3+27b^2\ne 0$ (equivalently, $f$ square-free) are in force without
further mention.  The fiberwise hypothesis ``the fiber $\{Y:Y^2=A\}$ is nonempty'' is imposed only
where stated.  The noncommuting constructions in
Section~\ref{sec:colored-singular-fibers} either
operate on an invertible $A$-component, where existence is automatic by
Lemma~\ref{lem:poly-square-root}, or take the global fiber to be nonempty and combine with the
localized idempotent twist (Lemma~\ref{lem:localized-idempotent-twist}).

\begin{remark}[Why $\operatorname{char} k\ne 2,3$]\label{rem:characteristic-restrictions}
The two exclusions enter the paper for independent reasons.  The hypothesis $\operatorname{char}
k\ne 2$ is used twice: the binomial series $(1+z)^{1/2}$ underlying the polynomial square root in
Lemma~\ref{lem:poly-square-root} requires the scalar $1/2\in k$, and the reflection $R=2P-I$ that
drives the idempotent twist of Lemma~\ref{lem:idempotent-twist} satisfies $R=-I$ in characteristic
$2$, so $Y=RG=-G$ commutes with $X$ whenever $G$ does and the twist collapses.  The hypothesis
$\operatorname{char} k\ne 3$ is used once, in Lemma~\ref{lem:critical-collapse-reduction}: the
critical-collapse exponent $d\in\{2,3\}$ from
Lemma~\ref{lem:polynomial-recovery-dichotomy} requires the cube-root series $(1+z)^{1/3}$ in the
case $d=3$, which occurs only for $f(t)=t^3+b$ with critical point $\lambda=0$.  In characteristic
$3$ the cubic itself degenerates further: $f''(\lambda)=6\lambda\equiv 0$, so every critical point
is automatically of higher order and the dichotomy structure changes.  Both restrictions could in
principle be relaxed by replacing the binomial series with Hensel or Newton iteration
(cf.\ \cite[Ch.\,5--6]{HighamFunctions}) and the
reflection twist with an alternative construction.  This is not pursued here.
\end{remark}

\section{Scope and main results}\label{sec:scope-main-results}

The classification has two layers.  First, for a fixed $X$, existence of a solution to $Y^2=f(X)$ is
the classical square-root problem for the matrix $f(X)$.  The only obstruction is the nilpotent
square-root pairing criterion on the zero-primary component.  Second, once the fiber is nonempty, forced
commutativity is governed by polynomial recovery of $X$ from $f(X)$.

\begin{theorem}[Fiberwise forced-commutativity classification]\label{thm:fiberwise-forced-commutativity-intro}
Let $X\in M_m(k)$ and set $A=f(X)$.  Assume that the fiber $\{Y\in M_m(k):Y^2=A\}$ is nonempty.
Then the following are equivalent:
\begin{enumerate}[label=\textup{(\roman*)}]
\item every $Y$ satisfying $Y^2=A$ commutes with $X$,
\item $X\in k[A]$.
\end{enumerate}
Equivalently, whenever $X\notin k[A]$, the fiber contains a square root $Y$ with $[X,Y]\ne 0$.
\end{theorem}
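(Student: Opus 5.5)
The direction (ii)$\Rightarrow$(i) needs no construction. If $X\in k[A]$ and $Y^2=A$, then Lemma~\ref{lem:centralizer-containment} gives $Y\in Z(A)$. Since $X$ is a polynomial in $A$, it follows that $Y$ commutes with $X$. Thompson's criterion is not even needed here.

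For (i)$\Rightarrow$(ii) I argue by contraposition: assume $X\notin k[A]$ and fix a square root $Y_0$ of $A$. The first step is a recovery dichotomy. Decompose $V=\bigoplus_\lambda V_\lambda$ and group the eigenvalues of $X$ by their values $\mu=f(\lambda)$. The generalized $\mu$-eigenspace $W_\mu$ of $A$ is then $\bigoplus_{f(\lambda)=\mu}V_\lambda$. On each $V_\lambda$ write $A|_{V_\lambda}-\mu I = f'(\lambda)N_\lambda+\tfrac{f''(\lambda)}2N_\lambda^2+N_\lambda^3$. If $f'(\lambda)\neq0$, this is $N_\lambda$ times a unit of $k[N_\lambda]$, so $N_\lambda\in k[A|_{V_\lambda}]$ by inverting the power series. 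Chinese remaindering across the distinct $\mu$ then shows that $X\in k[A]$ unless one of two things happens. Either (a) two distinct eigenvalues $\lambda\ne\lambda'$ of $X$ satisfy $f(\lambda)=f(\lambda')=\mu$, or (b) some eigenvalue $\lambda$ with $f'(\lambda)=0$ has $N_\lambda\ne0$. In case (b), $A|_{V_\lambda}-\mu I=N_\lambda^d u$ with $d\in\{2,3\}$ and $u$ a unit, and $N_\lambda$ is not a polynomial in $N_\lambda^d u$.

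Next I convert each failure into a noncommuting element of $Z(A)$ and then into a square root. In case (a), let $P$ be the spectral projection of $X$ onto $V_\lambda$. Then $P$ commutes with $X$, so I instead take a non-$X$-equivariant idempotent inside $Z(A|_{W_\mu})$. Since $A|_{W_\mu}$ restricted to $V_\lambda$ and to $V_{\lambda'}$ has the same eigenvalue $\mu$, there is a rank-one map $E:V_{\lambda'}\to V_\lambda$ intertwining $A$, for example from a top Jordan vector to an eigenvector. Using it, I form $P'=P+E$ (or a conjugate $S P S^{-1}$ with $S=I+E$), which is an idempotent in $Z(A)$. When $\lambda\ne\lambda'$, $E$ does not commute with $X$, so $P'$ does not commute with $X$.

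The \emph{idempotent twist} then works as follows. If $G$ is a square root of $A$ that is a polynomial in $A$, then $Y=(2P'-I)G$ satisfies $Y^2=(2P'-I)^2G^2=A$. If $G$ is invertible on the relevant block, $[X,Y]=2[X,P']G\ne0$. When $\mu\ne0$, the block $A|_{W_\mu}$ is invertible, so a polynomial square root $G$ exists via the binomial series, and the twist is applied only on $W_\mu$. Off that block I keep $Y_0$, using that $Y_0$ preserves each $W_\nu$. Case (b) with $\mu\ne0$ is handled the same way. Here $d$-th roots of the unit $u$ give $M=N_\lambda u^{1/d}$ with $M^d=A-\mu$ on $V_\lambda$, and $k[M]\supsetneq k[A|_{V_\lambda}]$. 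I then seek an idempotent in $Z(M^d)\setminus Z(N_\lambda)$. Splitting a Jordan block of $M$ of size $\ge2$ into its $d$ strands gives such an idempotent. One has to check that this idempotent fails to commute with $N_\lambda=Mu^{-1/d}$, which holds because $u$ is a polynomial in $M$ with nonzero constant term.

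The main obstacle is the case $\mu=0$, where $A|_{W_0}$ is nilpotent and may have no polynomial square root. Here I keep the given $Y_0|_{W_0}$ and modify it by conjugation. If $S\in Z(A)$ is invertible, then $SY_0S^{-1}$ is again a square root, and the set of all such conjugates is $\{Y: Y^2=A\}\cap$ (the $Z(A)^\times$-orbit of $Y_0$). If all of these commuted with $X$, then $Y_0$ would commute with $S^{-1}XS$ for every $S\in Z(A)^\times$. Differentiating at $S=I$ gives $[Y_0,[X,T]]=0$ for all $T\in Z(A)$. From a failure of type (a) or (b) I would then exhibit a specific $T$ violating this. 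For (a), choose $T$ mixing the two colors $V_\lambda,V_{\lambda'}$ (roots of $f=0$). For (b), choose $T$ acting on the critical strands. The derivative argument needs $Y_0$ to be generic enough. If it fails for this $Y_0$, I would first replace $Y_0$ by the pairing-normal-form square root built from the Jordan data of $A|_{W_0}$, chosen compatibly with the color decomposition of $X$. Making this choice explicit is where I expect most of the work.
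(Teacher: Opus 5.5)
You have the easy direction right, and your treatment of the nonzero-valued cases follows the paper. The paper also proves (ii)$\Rightarrow$(i) directly from $Y\in Z(A)$. It reduces via Lemma~\ref{lem:polynomial-recovery-dichotomy} to a spectral collision or a nontrivial critical block, and for $\mu\ne0$ it produces an idempotent in $Z(A_W)\setminus Z(X_W)$. Your $P+E$ plays the role of the graph projection of Lemma~\ref{lem:spectral-collision-idempotents}, and your strand-splitting idempotent is Lemma~\ref{lem:critical-collapse-idempotents}; the paper then applies the localized idempotent twist, as you do.

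Two small corrections. First, to pass from $[P,M]\ne0$ to $[P,N_\lambda]\ne0$ you need $M\in k[N_\lambda]$, which is immediate from $M=N_\lambda u^{1/d}$; the inclusion you cite is the reverse one. Second, case (b) never occurs with $\mu=0$: $f(\lambda)=f'(\lambda)=0$ would make $\lambda$ a repeated root of the square-free $f$. So there are no critical strands to treat in the singular fiber.

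The genuine gap is the zero-valued collision, where two distinct roots of $f$ lie in $\Spec(X)$; you leave it open. The conjugation strategy cannot close it starting from an arbitrary root, because a $Z(A)^\times$-orbit of square roots can lie entirely in $Z(X)$. For $f(t)=t^3-t$ and $X=\operatorname{diag}(0,1)$ one has $A=0$. The root $Y_0=0$ has orbit $\{0\}$, and $[Y_0,[X,T]]=0$ for every $T$.

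Your fallback does not repair this as stated. If ``compatibly with the colors'' means color-preserving, then every color-preserving square root in this example is again $0$. If it means using a cross-colored pair, such a pair need not exist. For instance, if $V_{r_1}$ contributes blocks of sizes $3,3$ and $V_{r_2}$ one block of size $1$, the only admissible matching is internal. For internal roots you have not verified the derivative criterion.

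The missing idea, used in Theorem~\ref{thm:color-breaking-direction}, is to leave the orbit by an additive rank-one perturbation rather than a conjugation. The steps are:
\begin{enumerate}
\item Take any square root $S_0$ of $A|_{W_0}$. If it moves a vector of some $V_{r_i}$ out of $V_{r_i}$, it already fails to commute with $X$.
\item Otherwise $S_0$ is nilpotent on each nonzero color space. Pick a nonzero functional $\ell$ on $V_{r_i}$ vanishing on $\im(S_0|_{V_{r_i}})$, extended by zero on the other colors. Pick $0\ne w\in\ker(S_0|_{V_{r_j}})$ with $j\ne i$, and put $B(v)=\ell(v)w$.
\item Then $S_0B=BS_0=B^2=0$, so $(S_0+B)^2=S_0^2=A|_{W_0}$. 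But $S_0+B$ has a nonzero $V_{r_i}\to V_{r_j}$ component, so it does not commute with $X$.
\item Glue $S_0+B$ with $Y_0$ off $W_0$.
\end{enumerate}
This needs no genericity of the starting root. In the example above it produces the nonzero square-zero root of Example~\ref{ex:singular-cross-root-2x2}, which lies outside the orbit of $0$.
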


This theorem is proved in Section~\ref{sec:global-classification-after-existence}, after the
existence criterion and the noncommuting constructions have been established.

\section{Polynomial square roots of invertible matrices}\label{sec:polynomial-square-roots-invertible}

The following lemma is a standard consequence of the polynomial functional calculus for matrices,
for which see \cite{HighamFunctions,JohnsonOkuboReams,BjorkHammarling,HNonnegativeSquareRoots}.

\begin{lemma}[Polynomial square root of an invertible matrix]\label{lem:poly-square-root}
Let $A\in M_m(k)$ be invertible.  Then there exists a polynomial $G\in k[A]$ such that
\[
 G^2=A.
\]
Moreover, $G$ is invertible.
\end{lemma}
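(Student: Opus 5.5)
We build $G$ eigenvalue by eigenvalue from the primary decomposition of $A$ and glue the local pieces together with the Chinese Remainder Theorem. Let $\mu_1,\dots,\mu_r$ be the distinct eigenvalues of $A$. They are all nonzero because $A$ is invertible. Let $n_i$ be the algebraic multiplicity of $\mu_i$, so the characteristic polynomial is $\chi_A(t)=\prod_i (t-\mu_i)^{n_i}$. By Cayley--Hamilton it suffices to find $g\in k[t]$ with
\[
g(t)^2\equiv t \pmod{\chi_A(t)}.
\]
Then $G=g(A)\in k[A]$ satisfies $G^2=A$.

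\textbf{Local step.} For each $i$, choose a scalar $s_i\in k$ with $s_i^2=\mu_i$, which is possible since $k$ is algebraically closed. Write $t=\mu_i(1+z)$ with $z=(t-\mu_i)/\mu_i$. Form the truncated binomial series
\[
h_i(t)=s_i\sum_{j=0}^{n_i-1}\binom{1/2}{j}z^j .
\]
The coefficients $\binom{1/2}{j}$ lie in $\mathbb Z[1/2]$, so they make sense in $k$ because $\operatorname{char}k\ne2$. The identity $\bigl(\sum_j\binom{1/2}{j}z^j\bigr)^2=1+z$ holds in $\mathbb Z[1/2][[z]]$, so it specialises to $k[[z]]$. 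Truncating modulo $z^{n_i}$ therefore gives $h_i(t)^2\equiv t \pmod{(t-\mu_i)^{n_i}}$.

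This is the one place where some care is needed: the binomial identity must be valid over $k$ and not only over $\mathbb Q$. If one prefers to avoid the formal-series argument, an alternative is Hensel lifting. The derivative of $x^2-t$ at $x=s_i$ is $2s_i$, which is a unit because $\mu_i\ne0$ and $\operatorname{char}k\ne2$. So a root of $x^2-t$ modulo $(t-\mu_i)$ lifts uniquely modulo $(t-\mu_i)^{n_i}$.

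\textbf{Gluing.} The ideals $(t-\mu_i)^{n_i}$ are pairwise coprime. By the Chinese Remainder Theorem there is $g\in k[t]$ with $g\equiv h_i \pmod{(t-\mu_i)^{n_i}}$ for every $i$. Then $g^2-t$ is divisible by every $(t-\mu_i)^{n_i}$, hence by $\chi_A$, and so $G=g(A)$ satisfies $G^2=A$.

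\textbf{Invertibility.} We have $\det(G)^2=\det(G^2)=\det A\ne 0$, so $G$ is invertible.
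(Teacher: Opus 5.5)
Your proposal is correct and follows essentially the same route as the paper: a truncated binomial expansion of $s_i(1+z)^{1/2}$ at each eigenvalue, glued by the Chinese remainder theorem, with invertibility of $G$ following from $G^2=A$. Your version phrases this as the congruence $g(t)^2\equiv t \pmod{\chi_A(t)}$ combined with Cayley--Hamilton. You also check that $\binom{1/2}{j}\in\mathbb Z[1/2]$, which spells out details the paper's short proof leaves implicit.
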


\begin{proof}
Let the distinct eigenvalues of $A$ be $c_1,\ldots,c_s$, all nonzero, and on the generalized
eigenspace for $c_i$ write $A=c_iI+Q_i$ with $Q_i$ nilpotent.  Choose $\gamma_i\in k$ with
$\gamma_i^2=c_i$.  The truncated binomial expansion of $\gamma_i(1+Q_i/c_i)^{1/2}$ gives a
polynomial $h_i\in k[t]$ with $h_i(A)^2=A$ on the $c_i$-primary component.  Patching by the
Chinese remainder theorem produces a single polynomial $h\in k[t]$ with $G=h(A)$ and $G^2=A$.
Since $G^2$ is invertible, so is $G$.
\end{proof}

\section{The idempotent twist}\label{sec:idempotent-twist}

The construction below converts an idempotent in $Z(A)\setminus Z(X)$ into a square root of
$A=f(X)$ that does not commute with $X$, providing the building block used throughout the proof.

\begin{lemma}[Idempotent twist]\label{lem:idempotent-twist}
Assume $A=f(X)$ is invertible.  Suppose there exists an idempotent $P\in Z(A)$ such that $P\notin
Z(X)$.  Then there exists $Y\in M_m(k)$ such that
\[
 Y^2=f(X),\qquad [X,Y]\ne 0.
\]
\end{lemma}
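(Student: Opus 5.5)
We use Lemma~\ref{lem:poly-square-root} to produce a polynomial square root $G=h(A)\in k[A]$ with $G^2=A$. We then twist $G$ by the reflection $R=2P-I$, as anticipated in Remark~\ref{rem:characteristic-restrictions}, and set
\[
 Y=RG=(2P-I)G .
\]

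\textbf{Step 1: $Y$ is a square root of $A$.} Since $P$ is idempotent, $R^2=4P^2-4P+I=I$. Since $P\in Z(A)$ and $G$ is a polynomial in $A$, $P$ commutes with $G$, and hence so does $R$. Therefore
\[
 Y^2=RGRG=R^2G^2=A=f(X).
\]

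\textbf{Step 2: $[X,Y]\ne 0$.} Since $G\in k[A]=k[f(X)]\subseteq k[X]$, the matrix $G$ commutes with $X$. Hence
\[
 [X,Y]=[X,R]\,G=2[X,P]\,G .
\]
By Lemma~\ref{lem:poly-square-root}, $G$ is invertible, so right multiplication by $G$ is injective. Since $\operatorname{char}k\ne 2$, the scalar $2$ is invertible. It follows that $[X,Y]=0$ would force $[X,P]=0$, contradicting $P\notin Z(X)$.

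\textbf{Main obstacle.} There is essentially no serious obstacle. The only points needing care are the following.
\begin{enumerate}[label=\textup{(\alph*)}]
\item The square root $G$ must be chosen in $k[A]$ rather than as an arbitrary square root. This ensures that $G$ commutes both with $P$ and with $X$. Lemma~\ref{lem:poly-square-root} guarantees such a $G$ exactly because $A$ is invertible.
\item Invertibility of $G$ and $\operatorname{char}k\ne 2$ are both needed so that the commutator $2[X,P]G$ cannot vanish.
\end{enumerate}
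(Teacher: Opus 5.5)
Your proposal is correct and is the paper's proof: a polynomial square root $G\in k[A]$ from Lemma~\ref{lem:poly-square-root}, the reflection $R=2P-I$, $Y=RG$, and $[X,Y]=[X,R]G\ne 0$ because $G$ is invertible. You also state explicitly that $[X,R]=2[X,P]$ uses $\operatorname{char}k\ne 2$, which the paper's proof leaves implicit.
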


\begin{proof}
By Lemma~\ref{lem:poly-square-root}, choose a polynomial square root $G\in k[A]$ with $G^2=A$.  Set
\[
 R=2P-I.
\]
Then $R^2=I$, and $R\in Z(A)$ because $P\in Z(A)$.  Hence $R$ commutes with $G$.  Define
\[
 Y=RG.
\]
Then
\[
 Y^2=RGRG=R^2G^2=A=f(X).
\]
Since $G\in k[A]=k[f(X)]\subseteq k[X]$ and $A$ is invertible, $G$ is invertible and commutes with
$X$.  Therefore $[X,G]=0$, and the Leibniz rule for the commutator yields
\[
 [X,Y]=[X,RG]=[X,R]G+R[X,G]=[X,R]G.
\]
Because $P\notin Z(X)$ and $R=2P-I$, it follows that $R\notin Z(X)$, so $[X,R]\ne 0$.  Since $G$ is invertible, $[X,Y]\ne 0$.
\end{proof}

\begin{lemma}[Localized idempotent twist]\label{lem:localized-idempotent-twist}
Let $A=f(X)$, and let $W$ be a direct sum of $A$-primary components on which $A_W=A|_W$ is
invertible.  Assume that there exists an idempotent $P\in Z(A_W)$ such that $P\notin Z(X_W)$.  If
the global equation $Y^2=A$ has at least one solution, then there exists a global solution
$\widetilde Y^2=A$ such that $[X,\widetilde Y]\ne 0$.
\end{lemma}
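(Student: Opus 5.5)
The plan is to split $V=W\oplus W'$, where $W'$ is the sum of the remaining $A$-primary components, and to glue a locally twisted root on $W$ to the existing root on $W'$. Both $W$ and $W'$ are sums of generalized eigenspaces of $A$, so they are invariant under everything commuting with $A$. That includes $X$: indeed $X$ commutes with $A=f(X)$. It also includes every global solution $Y$, by Lemma~\ref{lem:centralizer-containment}. Hence $X=X_W\oplus X_{W'}$ and $Y=Y_W\oplus Y_{W'}$ with respect to this splitting, and $Y^2=A$ gives $Y_{W'}^2=A_{W'}$. So the existing global solution supplies a square root on $W'$. On $W$ the component $A_W$ is invertible, so no existence hypothesis is needed there.

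Next I would check that $A_W=f(X_W)$. This holds because restriction to an $X$-invariant direct summand commutes with polynomial evaluation. I can then apply Lemma~\ref{lem:idempotent-twist} directly to the pair $(X_W,A_W)$ in $\operatorname{End}(W)$, with the given idempotent $P\in Z(A_W)\setminus Z(X_W)$. Its proof uses only invertibility of $f(X_W)$, so it works in any dimension. This produces $Y_1\in\operatorname{End}(W)$ with $Y_1^2=A_W$ and $[X_W,Y_1]\ne 0$. Concretely, $Y_1=(2P-I)G$ with $G\in k[A_W]$ from Lemma~\ref{lem:poly-square-root}.

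Define $\widetilde Y=Y_1\oplus Y_{W'}$. Then $\widetilde Y^2=A_W\oplus A_{W'}=A$. Because $X$ is block diagonal, $[X,\widetilde Y]=[X_W,Y_1]\oplus[X_{W'},Y_{W'}]$. Its $W$-block is nonzero, so $[X,\widetilde Y]\ne 0$.

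The only point needing care is the block-diagonality. One must make sure that $W$ really is a sum of full $A$-primary components, not of $X$-primary components. Otherwise an arbitrary global $Y$ need not preserve the splitting. Since the hypothesis states exactly this, I expect this step to be routine rather than a genuine obstacle.
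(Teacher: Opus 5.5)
Your proposal is correct and follows essentially the same route as the paper: split off the complement $W'$, use that any global root commutes with $A$ and so restricts to a root on $W'$, build $(2P-I_W)G$ on $W$ with $G\in k[A_W]$ a polynomial square root, and glue. The only differences are cosmetic. You cite Lemma~\ref{lem:idempotent-twist} for the pair $(X_W,A_W)$, after noting $A_W=f(X_W)$, where the paper repeats that argument inline, and you spell out why $[X,\widetilde Y]$ is block-diagonal with a nonzero $W$-block.
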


\begin{proof}
Both $W$ and its $A$-primary complement $W'$ are $X$-stable because $A=f(X)$ preserves the primary
decomposition of $X$.  By Lemma~\ref{lem:poly-square-root} on $W$ there is a polynomial square
root $G\in k[A_W]\subseteq k[X_W]$ with $G^2=A_W$.  Setting $R=2P-I_W$ and $\widetilde Y_W=RG$ gives
$\widetilde Y_W^2=A_W$.  Since $[X_W,G]=0$, the Leibniz rule yields $[X_W,\widetilde Y_W]=[X_W,R]G$,
which is nonzero because $G$ is invertible and $R$ does not commute with $X_W$.  Choose any global
square root $Y_0^2=A$ (existence by hypothesis).  Then $Y_0$ commutes with $A$ and so preserves
$W'$, so $\widetilde Y=\widetilde Y_W\oplus Y_0|_{W'}$ satisfies $\widetilde Y^2=A$ and
$[X,\widetilde Y]\ne 0$.
\end{proof}

\section{Auxiliary Jordan lemmas}\label{sec:auxiliary-jordan-lemmas}

This section records the elementary Jordan-form facts used later.  Standard references for Jordan
form and matrix analysis include \cite{HornJohnsonMatrixAnalysis,Gantmacher}.  They are included
here to make the paper logically self-contained up to the standard existence criterion for square
roots of nilpotent matrices.

It is a standard fact (\cite[Ch.\,VI]{Gantmacher}, \cite[\S3.2]{HornJohnsonMatrixAnalysis}) that
if $N\in M_m(k)$ is nilpotent and $U(N)\in k[N]$ is invertible, then $N U(N)$ has the same Jordan
partition as $N$.  Indeed $U(N)^j$ commutes with $N^j$ and is invertible, so
$\ker(NU(N))^j=\ker N^j$ for every $j\ge 1$, and the Jordan partition of a nilpotent operator is
determined by the kernel-dimension sequence.

\begin{lemma}[Power of a nilpotent Jordan block]\label{lem:power-nilpotent-jordan-block}
Let $J_n=J_n(0)$ and let $r\ge 1$.  Write
\[
 n=qr+s,
 \qquad 0\le s<r.
\]
Then $J_n^r$ has Jordan block sizes
\[
 (q+1)^s q^{r-s},
\]
where the notation means that $q+1$ occurs $s$ times and $q$ occurs $r-s$ times, after deleting zero parts.
\end{lemma}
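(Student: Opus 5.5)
The plan is to compute the action of $J_n^r$ on the standard basis directly and read off its Jordan structure by splitting the basis into residue classes mod $r$. Let $e_1,\dots,e_n$ be the standard basis with $J_n e_1=0$ and $J_n e_i=e_{i-1}$ for $i\ge 2$. Then
\[
 J_n^r e_i = e_{i-r}\ (i>r), \qquad J_n^r e_i = 0\ (i\le r).
\]
So $J_n^r$ moves each basis vector down by $r$ indices, or sends it to zero.

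For each residue $j\in\{1,\dots,r\}$, let $C_j=\{e_j,e_{j+r},e_{j+2r},\dots\}$ be the basis vectors with index at most $n$ and congruent to $j$ mod $r$. The span of $C_j$ is $J_n^r$-stable, and $J_n^r$ acts on it as a single nilpotent Jordan block of size $|C_j|$: it is a chain ending at $e_j\mapsto 0$. Since the $C_j$ partition the basis, $J_n^r$ is the direct sum of these blocks. The Jordan partition is therefore the multiset $\{|C_j|\}_{j=1}^r$ with zero sizes deleted.

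To count, note that $|C_j|$ is the number of integers $i\in[1,n]$ with $i\equiv j \pmod r$. With $n=qr+s$ and $0\le s<r$, this count is $q+1$ for $j=1,\dots,s$ and $q$ for $j=s+1,\dots,r$. This gives $(q+1)^s q^{r-s}$. When $q=0$, that is when $r>n$, the classes with $j>n$ are empty and contribute zero parts, which the statement deletes.

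There is no real obstacle. The only point needing care is the bookkeeping at the boundary: indices $i\le r$ are killed, and classes may be empty when $r>n$. As a cross-check, one can also use the kernel-dimension characterization recalled just before the lemma. Here $\dim\ker (J_n^r)^j=\min(n,rj)$, and the successive differences reproduce the same partition.
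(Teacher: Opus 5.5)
Your proposal is correct and follows the same route as the paper: you split the basis of $J_n$ into residue classes mod $r$, observe that $J_n^r$ acts on each class as a single chain, and count the class sizes from $n=qr+s$. The only differences are cosmetic. You use the downward-shift convention $e_i\mapsto e_{i-1}$ where the paper uses $e_i\mapsto e_{i+1}$, and you add a kernel-dimension cross-check that the paper does not need.
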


\begin{proof}
Let $e_1,\ldots,e_n$ be a Jordan chain for $J_n$, so that $J_ne_i=e_{i+1}$ for $i<n$ and $J_ne_n=0$.
The operator $J_n^r$ maps
\[
 e_i\mapsto e_{i+r}.
\]
Thus the chain splits into residue-class chains
\[
 e_i,e_{i+r},e_{i+2r},\ldots
\]
for $1\le i\le\min(r,n)$.  If $n=qr+s$, then exactly $s$ of these chains have length $q+1$ and the
remaining $r-s$ chains have length $q$, with zero-length chains omitted.  These chain lengths are
precisely the Jordan block sizes of $J_n^r$.
\end{proof}

\begin{lemma}[Power-collapse idempotent]\label{lem:power-collapse-idempotent}
Let $M$ be nilpotent, let $r\ge 2$, and assume that $M$ has at least one Jordan block of size at
least $2$.  Then there exists an idempotent $P$ such that
\[
 PM^r=M^rP
 \qquad\text{but}\qquad
 PM\ne MP.
\]
\end{lemma}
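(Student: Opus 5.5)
The plan is to reduce to a single Jordan block of size at least $2$ and take $P$ to be the projection onto one residue-class chain of the kind that appears in the proof of Lemma~\ref{lem:power-nilpotent-jordan-block}.

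First, choose a Jordan decomposition $V=V_1\oplus V'$ of the underlying space with respect to $M$. Here $V_1$ is the span of a single Jordan chain $e_1,\ldots,e_n$ with $n\ge 2$, which exists by hypothesis, and $V'$ is the sum of the remaining blocks. Both summands are $M$-stable and hence $M^r$-stable. Any operator of the form $P=P_1\oplus 0_{V'}$ with $P_1$ an idempotent on $V_1$ is idempotent on $V$. It commutes with $M$, respectively $M^r$, if and only if $P_1$ commutes with $J_n$, respectively $J_n^r$. So it suffices to treat $M=J_n$ with $n\ge 2$.

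Next, use the chain $Me_i=e_{i+1}$ for $i<n$ and $Me_n=0$. As in the proof of Lemma~\ref{lem:power-nilpotent-jordan-block}, $M^r$ sends $e_i\mapsto e_{i+r}$, with $e_j=0$ for $j>n$. Hence $V_1$ splits as the direct sum of the residue-class spans
\[
C_j=\operatorname{span}\{e_i: 1\le i\le n,\ i\equiv j \pmod r\},\qquad 1\le j\le \min(r,n),
\]
and each $C_j$ is $M^r$-stable. Let $P_1$ be the projection onto $C_1$ along $\bigoplus_{j\ge 2}C_j$. Because $M^r$ preserves every summand of a direct sum decomposition, it commutes with each of the corresponding projections, so $P_1M^r=M^rP_1$. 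This also covers the degenerate case $n\le r$, where $M^r=0$, $C_1=k e_1$, and the commutation is trivial.

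Finally, show that $P_1$ fails to commute with $M$ by testing on $e_1$. Since $n\ge 2$ and $r\ge 2$, the vector $e_2$ exists and lies in $C_2$, because $2\not\equiv 1\pmod r$. Therefore
\[
P_1Me_1=P_1e_2=0,\qquad MP_1e_1=Me_1=e_2\ne 0.
\]
This is exactly where both hypotheses $r\ge 2$ and the block size $n\ge 2$ are used; with $r=1$ the chains would not separate. There is no serious obstacle. The only care needed is to check that the extension by zero on $V'$ preserves both the idempotency and the commutation with $M^r$, which holds because the decomposition $V_1\oplus V'$ is $M$-stable.
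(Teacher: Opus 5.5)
Your proposal is correct and follows the paper's proof: you take a Jordan chain of length $n\ge 2$, split it into residue classes modulo $r$, project onto one class along the other classes and the remaining chains, and detect $PM\ne MP$ on the first chain vector. Your explicit reduction to a single block via $P_1\oplus 0_{V'}$ simply spells out what the paper does implicitly when it projects along all other Jordan chains.
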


\begin{proof}
Choose a Jordan chain $e_0,e_1,\ldots,e_{n-1}$ for $M$ with $n\ge 2$ and $Me_i=e_{i+1}$.  Decompose
this chain into residue classes modulo $r$:
\[
 E_j=\langle e_i:i\equiv j\pmod r\rangle.
\]
Each $E_j$ is invariant under $M^r$, because $M^r$ preserves residues modulo $r$.  Let $P$ be the
projection onto one nonzero residue summand, for instance $E_0$, along the direct sum of the
remaining residue summands and all other Jordan chains.  Then $P^2=P$ and $PM^r=M^rP$.  But $M$
sends $e_0$ to $e_1$, which lies in a different residue class because $r\ge 2$ and $n\ge 2$.  Thus
$PMe_0=0$, while $MPe_0=e_1$, so $PM\ne MP$.
\end{proof}

\begin{lemma}[Critical collapse reduction]\label{lem:critical-collapse-reduction}
Let $N$ be nilpotent and let
\[
 H(N)=N^rU(N),
\]
where $r\ge 2$ and $U(N)\in k[N]$ is invertible.  Assume $\operatorname{char} k$ does not divide $r$.
Then there exists a nilpotent matrix
\[
 M=N V(N),
\]
with $V(N)\in k[N]$ invertible, such that
\[
 H(N)=M^r
 \qquad\text{and}\qquad
 k[M]=k[N].
\]
Consequently, if $N$ has a Jordan block of size at least $2$, then there is an idempotent $P$
commuting with $H(N)$ but not with $N$.
\end{lemma}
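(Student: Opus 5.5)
The plan is to take an $r$-th root of the unit $U(N)$ inside $k[N]$ and absorb it into $N$. Since $U(N)\in k[N]$ is invertible, write $U(N)=u_0 I+Z$, where $u_0\ne 0$ is its constant term and $Z\in Nk[N]$ is nilpotent. Choose $\mu\in k$ with $\mu^r=u_0$; this is possible because $k$ is algebraically closed. Consider the truncated binomial series
\[
V(N)=\mu\sum_{j\ge 0}\binom{1/r}{j}\bigl(u_0^{-1}Z\bigr)^j .
\]
It is a finite sum because $Z$ is nilpotent. The coefficients $\binom{1/r}{j}$ are defined over $k$, since their denominators are products of powers of $r$ and factorials $j!$. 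The identity $\bigl((1+z)^{1/r}\bigr)^r=1+z$ holds in $\mathbb Q[[z]]$, and its coefficients lie in $\mathbb Z[1/r]$. (Equivalently, a Hensel/Newton lift from $\mu^r=u_0$ works directly, since the derivative $r\mu^{r-1}$ is a unit when $\operatorname{char}k\nmid r$.) The identity therefore specializes to $V(N)^r=U(N)$ in $k[N]$. Moreover $V(N)$ is invertible, because its constant term is $\mu\ne0$.

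The main obstacle is this coefficient issue. I would handle it cleanly by the Newton/Hensel formulation: successively solve $V^r\equiv U \pmod{N^{i}}$ for $i=1,2,\dots$, using invertibility of $r\mu^{r-1}$ at each step. This avoids any worry about which primes appear in $\binom{1/r}{j}$. (For the actual application $r\in\{2,3\}$, and $\operatorname{char}k\ne2,3$ anyway.)

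Set $M=NV(N)$. Since $N$ and $V(N)$ commute, $M^r=N^rV(N)^r=N^rU(N)=H(N)$. Clearly $M\in k[N]$ is nilpotent, so $k[M]\subseteq k[N]$. For the reverse inclusion, by the standard fact recalled before Lemma~\ref{lem:power-nilpotent-jordan-block}, $M=NV(N)$ has the same Jordan partition as $N$. In particular both have the same nilpotency index $n$, so $\dim k[M]=\dim k[N]=n$ (each is spanned by $I,\dots,$ the $(n-1)$-st power). Hence $k[M]=k[N]$. Alternatively, one can write $N=MV(N)^{-1}$ and express $V(N)^{-1}$ as a power series in $M$ by inverting the substitution $N\mapsto NV(N)$, but the dimension count is quicker.

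For the consequence, suppose $N$ has a Jordan block of size at least $2$. Then $M$ does too, since the partitions agree. Lemma~\ref{lem:power-collapse-idempotent} applied to $M$ and $r\ge2$ gives an idempotent $P$ with $PM^r=M^rP$ and $PM\ne MP$. The first relation says $P$ commutes with $H(N)$. If $P$ commuted with $N$, it would commute with all of $k[N]=k[M]$, in particular with $M$, which is a contradiction. So $P$ commutes with $H(N)$ but not with $N$.
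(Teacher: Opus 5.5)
Your proposal is correct and follows the paper's proof: take an $r$-th root $V(N)$ of the unit $U(N)$ by the truncated binomial series, set $M=NV(N)$, and apply Lemma~\ref{lem:power-collapse-idempotent} to $M$; your $\mathbb Z[1/r]$-integrality remark, or the Hensel lift, is the right justification for the series, whereas denominators involving $j!$ would not be. The only differences are that you get $k[M]=k[N]$ from equal nilpotency indices plus a dimension count, where the paper inverts $tV(t)$ under composition in $k[t]/(t^{\nu})$, and that you explicitly check $M$ inherits a Jordan block of size at least $2$, which the paper leaves implicit.
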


\begin{proof}
Since $U(N)$ is invertible, $U(0)\ne 0$. Choose $\alpha\in k$ with $\alpha^r=U(0)$, and write
\[
 U(N)=\alpha^r(I+W(N)),
\]
where $W(N)$ is nilpotent. Since $\operatorname{char} k$ does not divide $r$, the binomial series
$(1+z)^{1/r}$ is defined and truncates on nilpotent inputs. Hence
\[
 V(N)=\alpha(I+W(N))^{1/r}\in k[N]
\]
is invertible and satisfies $V(N)^r=U(N)$. Set $M=NV(N)$.  Then
\[
 M^r=N^rV(N)^r=N^rU(N)=H(N).
\]
Since $V(0)\ne 0$, the polynomial $tV(t)$ is invertible under composition in the truncated algebra
$k[t]/(t^{\nu})$ where $\nu$ is the nilpotency index of $N$, so $k[M]=k[N]$.  Lemma~\ref{lem:power-collapse-idempotent}
then yields an idempotent $P$ commuting with $M^r=H(N)$ but not with $M$, hence not with $N$.
\end{proof}

\begin{lemma}[Two-chain nilpotent square-root construction]\label{lem:two-chain-square-root-construction}
Let $T$ be nilpotent with two Jordan chains
\[
 u_0,u_1,\ldots,u_{p-1},
 \qquad
 v_0,v_1,\ldots,v_{q-1},
\]
where $Tu_i=u_{i+1}$ and $Tv_i=v_{i+1}$, with terminal vectors sent to $0$.  If $p=q$ or $p=q+1$,
then $T$ has a square root on the direct sum of these two chains.
\end{lemma}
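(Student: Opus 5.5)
The plan is to write down an explicit square root $S$ on the span $U=\langle u_0,\ldots,u_{p-1},v_0,\ldots,v_{q-1}\rangle$ by interleaving the two chains into a single chain of length $p+q$. This is the converse direction of Lemma~\ref{lem:power-nilpotent-jordan-block} with $r=2$: the square of $J_{n}$ has blocks $\lceil n/2\rceil,\lfloor n/2\rfloor$. Those two sizes are exactly $(q,q)$ when $n=2q$ and $(q+1,q)$ when $n=2q+1$, which matches the hypotheses $p=q$ or $p=q+1$.

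Concretely, I would define $w_{2i}=u_i$ for $0\le i\le p-1$ and $w_{2i+1}=v_i$ for $0\le i\le q-1$. Because $p\in\{q,q+1\}$, the indices $0,1,\ldots,p+q-1$ are each hit exactly once, so $w_0,\ldots,w_{p+q-1}$ is a basis of $U$. I then set $Sw_j=w_{j+1}$ for $j<p+q-1$ and $Sw_{p+q-1}=0$, extending linearly on $U$. Thus $S$ is a single nilpotent Jordan block of size $p+q$ on $U$.

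The verification splits into two parity cases.
\begin{itemize}
\item For even indices, $S^2u_i=S^2w_{2i}=w_{2i+2}=u_{i+1}$ whenever $2i+2\le p+q-1$, that is, whenever $i+1\le p-1$. For the terminal index $i=p-1$, the index $2p$ exceeds $p+q-1$, since $p\ge q$. So $S^2u_{p-1}=0$: either the chain has already terminated at $w_{p+q-1}$ and the next application of $S$ gives $0$, or $2p-1>p+q-1$ and $S^2u_{p-1}=0$ directly.
\item For odd indices, $S^2v_i=w_{2i+3}=v_{i+1}$ when $i+1\le q-1$, and $S^2v_{q-1}=0$ because $2q+1>p+q-1$, using $p\le q+1$.
\end{itemize}
Hence $S^2=T$ on every basis vector of $U$, so $S^2=T|_U$.

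The only delicate point is the endpoint bookkeeping just described, namely checking that the terminal vectors $u_{p-1}$ and $v_{q-1}$ are killed by $S^2$ rather than sent to a spurious vector. This is exactly where each of the two inequalities $q\le p\le q+1$ is used, one for each chain. If $T$ has further chains outside $U$, the statement only asks for a square root on this direct sum, so nothing more is required. If a global operator is wanted later, one can extend $S$ by zero or by any square root on a $T$-invariant complement.
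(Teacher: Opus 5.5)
Your proposal is correct and uses the same construction as the paper. The paper sets $Su_i=v_i$ and $Sv_i=u_{i+1}$, killing $v_{p-1}$ when $p=q$ and $u_q$ when $p=q+1$; this is exactly your interleaved single chain $u_0\mapsto v_0\mapsto u_1\mapsto\cdots\mapsto 0$ of length $p+q$, checked by the same endpoint verification. One small caveat concerns only your closing aside: extending $S$ by zero off $U$ gives a global square root of $T$ only on parts where $T$ itself vanishes, such as the size-one singleton blocks.
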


\begin{proof}
Assume first $p=q$.  Define $S$ by
\[
 S u_i=v_i\quad(0\le i\le p-1),
 \qquad
 S v_i=u_{i+1}\quad(0\le i\le p-2),
 \qquad
 S v_{p-1}=0.
\]
Then $S^2u_i=u_{i+1}=Tu_i$ for $i<p-1$, and $S^2u_{p-1}=0=Tu_{p-1}$.  Also $S^2v_i=v_{i+1}=Tv_i$ for
$i<p-1$, and $S^2v_{p-1}=0=Tv_{p-1}$.

If $p=q+1$, define
\[
 S u_i=v_i\quad(0\le i\le q-1),
 \qquad
 S u_q=0,
 \qquad
 S v_i=u_{i+1}\quad(0\le i\le q-1).
\]
The same direct verification gives $S^2=T$ on both chains.
\end{proof}

\begin{proposition}[Nilpotent square-root pairing criterion, classical]\label{prop:nilpotent-square-root-criterion}
Let $T$ be nilpotent with Jordan block sizes
\[
 \theta_1\ge\theta_2\ge\cdots\ge\theta_s.
\]
Then $T$ has a square root if and only if the multiset of Jordan block sizes can be decomposed into
pairs $(p,q)$ with $|p-q|\le 1$ together with possibly any number of unpaired blocks of size exactly
$1$ (no unpaired block of size $\ge 2$ is permitted).  This is the standard nilpotent square-root
pairing criterion, going back to Cross and Lancaster \cite{CrossLancaster1974}. See also
\cite{HighamFunctions,HornJohnsonMatrixAnalysis,Gantmacher,HNonnegativeSquareRoots}.  A proof is
included for self-containment.
\end{proposition}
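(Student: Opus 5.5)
The plan has two directions: sufficiency is an assembly of the two-chain construction, and necessity is a Jordan-chain analysis of an arbitrary square root.

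\textbf{Sufficiency.} Suppose the multiset of block sizes of $T$ splits into pairs $(p,q)$ with $|p-q|\le 1$, together with some singletons of size $1$.
\begin{enumerate}[label=\textup{(\alph*)}]
\item Choose a Jordan basis adapted to this decomposition, so the underlying space is a direct sum of $T$-stable subspaces. Each is spanned either by two chains of lengths differing by at most one, or by a single vector in $\ker T$.
\item On each two-chain summand, order the pair so that $p=q$ or $p=q+1$. Lemma~\ref{lem:two-chain-square-root-construction} then gives a square root on that summand.
\item On each singleton summand $T=0$, so take $S=0$.
\item The direct sum of these local square roots is a square root of $T$.
\end{enumerate}

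\textbf{Necessity.} Suppose $S^2=T$. Then $S$ is nilpotent, since $S^{2\nu}=T^\nu=0$. Let $S$ have Jordan blocks $J_{n_1},\dots,J_{n_l}$.
\begin{enumerate}[label=\textup{(\alph*)}]
\item Then $T=S^2=\bigoplus_j J_{n_j}^2$, so the blocks of $T$ are obtained by squaring each block of $S$ separately.
\item By Lemma~\ref{lem:power-nilpotent-jordan-block} with $r=2$, $J_n^2$ has block sizes $\lceil n/2\rceil,\lfloor n/2\rfloor$, deleting a zero part.
\item For $n\ge 2$ these two sizes are both positive and differ by at most one, so they form a pair $(p,q)$ with $|p-q|\le1$.
\item For $n=1$ we get the single block of size $1$, which is exactly an unpaired block of size $1$.
\item Collecting over all blocks of $S$ decomposes the multiset of block sizes of $T$ in the required form.
\end{enumerate}

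\textbf{Main obstacle.} The only subtle point is in the necessity direction. The Jordan partition of $T$ is an invariant, so the block-by-block description of $S^2$ determines it uniquely up to similarity. The decomposition read off from $S$ is therefore a valid decomposition of the given multiset $\{\theta_i\}$, even though the pairing need not be unique. I would state this via similarity invariance of the partition, using the kernel-dimension sequence recalled before Lemma~\ref{lem:power-nilpotent-jordan-block}.

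A routine check remains in sufficiency: the direct-sum decomposition must really be $T$-stable. This holds because each summand is spanned by full Jordan chains of $T$.
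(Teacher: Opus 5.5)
Your proposal is correct and follows the paper's proof essentially step for step. Sufficiency comes from applying Lemma~\ref{lem:two-chain-square-root-construction} to each ordered pair and taking the zero map on size-one singletons; necessity comes from splitting a square root $S$ into Jordan blocks and reading off the sizes of each $J_n(0)^2$ from Lemma~\ref{lem:power-nilpotent-jordan-block} with $r=2$, and your explicit remarks that $S$ is nilpotent and that the Jordan partition of $T$ is a similarity invariant only make precise what the paper leaves implicit.
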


\begin{proof}
For sufficiency, pair the Jordan chains according to the stated rule, reorder each pair so that its
sizes are $(p,q)$ with $p=q$ or $p=q+1$, and apply the two-chain construction to each pair.  On an
unpaired size-one block, define the square root to be zero.  Taking the direct sum of these square
roots gives an operator $S$ with $S^2=T$.

For necessity, suppose $S^2=T$.  Decompose $S$ into nilpotent Jordan chains.  The square of a single
Jordan block $J_n(0)$ has block sizes obtained from Lemma~\ref{lem:power-nilpotent-jordan-block}
with $r=2$, namely either $(q,q)$ when $n=2q$ or $(q+1,q)$ when $n=2q+1$.  Thus every nontrivial
Jordan block of $T=S^2$ arises from pairing two blocks whose sizes differ by at most $1$, while a
one-dimensional zero block may arise as the square of a one-dimensional zero block.  Summing over
the Jordan blocks of $S$ gives the stated pairing condition.
\end{proof}

\section{Partition transforms}\label{sec:partition-transforms}

This section records how Jordan partitions transform under $f(X)$, in preparation for the
colored-fiber analysis of Section~\ref{sec:colored-singular-fibers}.

For a nilpotent matrix $N$, let $\Jord(N)$ denote the partition of its Jordan block sizes.  If
$\eta$ is a partition, define $\eta^{[r]}$ by replacing each part $n$ by the Jordan block sizes of
$J_n(0)^r$.  Explicitly, if
\[
 n=qr+s,\qquad 0\le s<r,
\]
then $n$ is replaced by
\[
 (q+1)^s q^{r-s},
\]
after deleting zero parts.

For each $\lambda\in\Spec(X)$, set $\eta_\lambda=\Jord(N_\lambda)$ and define $\pi_\lambda$ by
\[
\pi_\lambda=
\begin{cases}
\eta_\lambda, & f'(\lambda)\ne 0,\\
\eta_\lambda^{[2]}, & f'(\lambda)=0,\ \lambda\ne 0,\\
\eta_\lambda^{[3]}, & f(t)=t^3+b,\ \lambda=0.
\end{cases}
\]

\begin{proposition}[Local Jordan transform]\label{prop:local-jordan-transform}
The partition $\pi_\lambda$ is the Jordan partition of $f(X)|_{V_\lambda}$ at eigenvalue $f(\lambda)$.
\end{proposition}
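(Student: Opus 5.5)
The plan is to Taylor-expand $f$ at $\lambda$ and reduce each case to a statement about a nilpotent matrix times an invertible element of $k[N_\lambda]$. On $V_\lambda$ we have $X|_{V_\lambda}=\lambda I+N_\lambda$ with $N=N_\lambda$ nilpotent. Since $f$ is a cubic, Taylor's formula (valid because $\operatorname{char}k\ne 2,3$) gives
\[
 f(X)|_{V_\lambda}=f(\lambda)I+f'(\lambda)N+\tfrac12 f''(\lambda)N^2+N^3 ,
\]
where $\tfrac12 f''(\lambda)=3\lambda$. The Jordan partition of $f(X)|_{V_\lambda}$ at $f(\lambda)$ is then the Jordan partition of the nilpotent operator $H(N)=f'(\lambda)N+3\lambda N^2+N^3$. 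We determine this partition by writing $H(N)=N^rU(N)$ with $U(N)\in k[N]$ invertible and choosing $r$ according to the three cases in the definition of $\pi_\lambda$.

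\textbf{Case $f'(\lambda)\ne0$.} Take $r=1$ and $U(N)=f'(\lambda)I+3\lambda N+N^2$. This is invertible because $U(0)=f'(\lambda)\ne 0$ and the remaining terms are nilpotent and commute with $f'(\lambda)I$. By the standard fact recorded at the start of Section~\ref{sec:auxiliary-jordan-lemmas}, $NU(N)$ has the same Jordan partition as $N$, namely $\eta_\lambda$.

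\textbf{Case $f'(\lambda)=0$, $\lambda\ne0$.} Take $r=2$ and $U(N)=3\lambda I+N$. This is invertible since $3\lambda\ne0$, using $\operatorname{char}k\ne3$.

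\textbf{Case $f'(\lambda)=0$, $\lambda=0$.} Here $f'(0)=a=0$, so $f=t^3+b$. Take $r=3$ and $U=I$, giving $H(N)=N^3$ directly.

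One point should be stated explicitly. A critical point $\lambda$ of $t^3+at+b$ satisfies $3\lambda^2+a=0$, so $\lambda=0$ is critical exactly when $a=0$. Hence the three cases are exhaustive and agree with the definition of $\pi_\lambda$.

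For $r\in\{2,3\}$ we apply Lemma~\ref{lem:critical-collapse-reduction}, since $\operatorname{char}k\nmid r$. It produces $M=NV(N)$ with $V(N)\in k[N]$ invertible and $H(N)=M^r$. By the standard fact again, $\Jord(M)=\Jord(N)=\eta_\lambda$. It remains to compute $\Jord(M^r)$. Decompose $V_\lambda$ into $M$-Jordan chains; each chain is $M^r$-stable, so the partition of $M^r$ is obtained blockwise. Lemma~\ref{lem:power-nilpotent-jordan-block} replaces each part $n$ by $(q+1)^sq^{r-s}$, which is precisely the definition of $\eta_\lambda^{[r]}$.

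The only delicate step is the identification $\Jord(NV(N))=\Jord(N)$, i.e.\ the kernel-dimension argument quoted earlier, together with checking that the Taylor coefficients give an invertible $U$. Both are routine, so I expect no real obstacle; the main care is in correctly matching the case split of the definition to the vanishing pattern of $f'(\lambda)$ and $f''(\lambda)$.
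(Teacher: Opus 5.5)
Your proof is correct and follows the paper's argument. You Taylor-expand at $\lambda$, write the nilpotent part as $N_\lambda^rU(N_\lambda)$ with $U$ invertible according to the three cases, and finish with Lemma~\ref{lem:power-nilpotent-jordan-block}. The only difference is at $r\in\{2,3\}$: you absorb the unit by extracting an $r$-th root through Lemma~\ref{lem:critical-collapse-reduction}, whereas the paper goes directly from $N_\lambda^rU(N_\lambda)$ to $N_\lambda^r$, implicitly by the kernel argument $\ker(N_\lambda^rU)^j=\ker N_\lambda^{rj}$. The paper's route is slightly shorter and needs no characteristic hypothesis at that step.
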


\begin{proof}
Taylor expansion gives
\[
 f(\lambda I+N_\lambda)=f(\lambda)I+f'(\lambda)N_\lambda+3\lambda N_\lambda^2+N_\lambda^3.
\]
If $f'(\lambda)\ne 0$, the nilpotent part is $N_\lambda U(N_\lambda)$ with $U(N_\lambda)$
invertible, so the Jordan type is unchanged.  If $f'(\lambda)=0$ and $\lambda\ne 0$, the nilpotent
part is $N_\lambda^2(3\lambda I+N_\lambda)$, and $3\lambda I+N_\lambda$ is invertible, so the Jordan
type is that of $N_\lambda^2$.  If $\lambda=0$ and $f'(0)=0$, then $a=0$ and the nilpotent part is
$N_\lambda^3$.  The stated partition transform follows from
Lemma~\ref{lem:power-nilpotent-jordan-block}.
\end{proof}

\section{Colored singular fibers}\label{sec:colored-singular-fibers}

This section records the internal structure of the singular fiber $f(X)=0$.  Its purpose is to make
explicit the information that is lost when the three root-primary sectors of $X$ collapse to the
same zero-primary sector of $f(X)$.  Sections~\ref{subsec:colored-formal-data}--\ref{subsec:colored-existence-and-noncomm}
are the formal core used in the proof of the main theorem.  Section~\ref{subsec:colored-graph}
defines the color graph and gives an illustrative example.

Let $r_1,r_2,r_3$ be the distinct roots of $f$.  Since $f$ is square-free, $f'(r_i)\ne 0$ for all
$i$, so the zero-primary partition of $f(X)$ is
\[
 \theta=\eta_{r_1}\sqcup\eta_{r_2}\sqcup\eta_{r_3}.
\]

\begin{definition}[Square-root pairable partition]\label{def:square-root-pairable-partition}
A partition $\theta$ is square-root pairable if its multiset of parts can be decomposed into pairs
$(p,q)$ with $|p-q|\le 1$, together with possible singleton parts, and every singleton part is equal
to $1$.
\end{definition}

\subsection{Colored data and matchings}\label{subsec:colored-formal-data}

Let $r_1,r_2,r_3$ be the distinct roots of $f$.  On the zero-primary component of $f(X)$, write
\[
V_0=V_{r_1}\oplus V_{r_2}\oplus V_{r_3},
\]
where $V_{r_i}$ is the generalized $r_i$-eigenspace of $X$, possibly zero.  A Jordan block of the
nilpotent operator $f(X)|_{V_0}$ therefore has two labels: its size $p$ and its root-color $i$.

\begin{definition}[Colored zero partition]\label{def:colored-zero-partition}
The colored zero partition of $X$ is the multiset
\[
\mathcal P_0(X)=\{(p,i):\text{there is a size }p\text{ Jordan block of }f(X)|_{V_{r_i}}\}.
\]
Its uncolored shadow is the ordinary partition obtained by forgetting $i$.
\end{definition}

\begin{definition}[Admissible colored matching]\label{def:admissible-colored-matching}
An admissible matching of $\mathcal P_0(X)$ is a decomposition of its elements into pairs
\[
(p,i)\sim(q,j),\qquad |p-q|\le 1,
\]
together with possible singleton blocks of size $1$.  A matched edge is internal if $i=j$ and cross-colored if $i\ne j$.
\end{definition}

\subsection{Existence and color-breaking direction}\label{subsec:colored-existence-and-noncomm}

\begin{proposition}[Existence ignores color]\label{prop:existence-ignores-color}
The equation $Y^2=f(X)$ has a solution on the zero-primary component if and only if the uncolored
shadow of $\mathcal P_0(X)$ admits an admissible matching.
\end{proposition}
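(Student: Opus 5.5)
The plan is to reduce the statement directly to the classical nilpotent pairing criterion, Proposition~\ref{prop:nilpotent-square-root-criterion}. First I make precise what "a solution on the zero-primary component" means. Let $W_0=V_{r_1}\oplus V_{r_2}\oplus V_{r_3}$ be the zero-primary component of $A=f(X)$, and write $T=A|_{W_0}$. Since $A=f(X)$ preserves the primary decomposition of $X$, the space $W_0$ is stable under $X$ and $A$, and $T$ is nilpotent. Any global $Y$ with $Y^2=A$ commutes with $A$ by Lemma~\ref{lem:centralizer-containment}, so it preserves every $A$-primary component and in particular restricts to a square root of $T$ on $W_0$. Conversely, a square root of $T$ on $W_0$ is what we mean by a local solution. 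So the claim amounts to: $T$ has a square root on $W_0$ if and only if the uncolored shadow of $\mathcal P_0(X)$ admits an admissible matching.

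Next I identify the uncolored shadow with $\Jord(T)$. Since $f$ is square-free, $f'(r_i)\ne 0$ for each $i$. By Proposition~\ref{prop:local-jordan-transform}, in the case $f'(\lambda)\ne0$, the Jordan partition of $f(X)|_{V_{r_i}}$ at eigenvalue $f(r_i)=0$ is $\eta_{r_i}$. The restriction $T$ is the direct sum of these three nilpotent operators, so
\[
\Jord(T)=\eta_{r_1}\sqcup\eta_{r_2}\sqcup\eta_{r_3}.
\]
By Definition~\ref{def:colored-zero-partition}, this is exactly the multiset obtained from $\mathcal P_0(X)$ by forgetting colors.

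Finally, an admissible matching of the uncolored shadow, in the sense of Definition~\ref{def:admissible-colored-matching} with colors discarded, is precisely a decomposition into pairs with $|p-q|\le 1$ plus singletons of size $1$. That is the condition of Definition~\ref{def:square-root-pairable-partition} and of Proposition~\ref{prop:nilpotent-square-root-criterion}. Applying that proposition to $T$ gives both directions. For sufficiency, the square root is assembled chain by chain using Lemma~\ref{lem:two-chain-square-root-construction}. This works even when the two chains in a pair carry different colors, because the construction uses only the Jordan chains of $T$ and never those of $X$. That is the sense in which "existence ignores color."

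There is no serious obstacle. The one point needing care is to state explicitly that a local square root lives on $W_0$ as a space, not on the individual $V_{r_i}$. A cross-colored pair yields an $S$ that mixes $V_{r_i}$ and $V_{r_j}$, so $S$ need not preserve the $X$-primary summands; it only has to commute with $T$, which it does automatically.
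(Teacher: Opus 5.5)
Your proposal is correct and takes the same route as the paper: both apply the classical nilpotent pairing criterion (Proposition~\ref{prop:nilpotent-square-root-criterion}) to $f(X)|_{V_0}$ and observe that it depends only on block sizes, i.e.\ on the uncolored shadow of $\mathcal P_0(X)$. Your extra steps spell out what the paper leaves implicit: restricting a global root to $V_0$ via Lemma~\ref{lem:centralizer-containment}, and invoking Proposition~\ref{prop:local-jordan-transform}. The latter is harmless but not needed, since $\mathcal P_0(X)$ is defined directly from the Jordan blocks of $f(X)|_{V_{r_i}}$.
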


\begin{proof}
The claim is precisely Proposition~\ref{prop:nilpotent-square-root-criterion} applied to the nilpotent
operator $f(X)|_{V_0}$.  That criterion depends only on Jordan block sizes of $f(X)|_{V_0}$, not on
which root-primary sector of $X$ produced each block.
\end{proof}

\begin{proposition}[Cross-colored matching forces noncommutativity]\label{prop:cross-colored-matching}
If an admissible matching contains a cross-colored edge, then there exists a square root $Y$ of
$f(X)$ such that $[X,Y]\ne 0$.
\end{proposition}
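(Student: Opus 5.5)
We build a square root $Y$ of $f(X)$ that sends some vector of $V_{r_i}$ into $V_{r_j}$ with $i\ne j$; such a $Y$ cannot commute with $X$. The argument has three steps.

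\textbf{Step 1: build $Y$ on the zero-primary component $V_0=V_{r_1}\oplus V_{r_2}\oplus V_{r_3}$.} Take the given admissible matching with a cross-colored edge $(p,i)\sim(q,j)$, $i\ne j$, $|p-q|\le 1$; after relabelling we may assume $p=q$ or $p=q+1$. Set $T=f(X)|_{V_0}$. Since $f'(r_\ell)\ne 0$ for each $\ell$, the Jordan type of $T|_{V_{r_\ell}}$ is $\eta_{r_\ell}$, and $T$ preserves each $V_{r_\ell}$ (Proposition~\ref{prop:local-jordan-transform}). So we can choose Jordan chains of $T$ adapted to the decomposition: every chain lies inside a single $V_{r_\ell}$, and each chain carries its color. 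Let $u_0,\dots,u_{p-1}\subset V_{r_i}$ and $v_0,\dots,v_{q-1}\subset V_{r_j}$ be the chains of the cross-colored edge. On their span, apply the two-chain construction of Lemma~\ref{lem:two-chain-square-root-construction} to get $S$ with $S^2=T$ and $Su_0=v_0\ne 0$. Treat all other matched pairs the same way, and set $S=0$ on the singleton size-one blocks. As in the proof of Proposition~\ref{prop:nilpotent-square-root-criterion}, the direct sum $S_0$ satisfies $S_0^2=T$ on $V_0$.

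\textbf{Step 2: extend $Y$ to the whole space.} Let $W$ be the sum of the remaining $f(X)$-primary components. Then $f(X)|_W$ is invertible, and $W$ is $X$-stable because $f(X)\in k[X]$. Lemma~\ref{lem:poly-square-root} gives $G\in k[f(X)|_W]$ with $G^2=f(X)|_W$. Define $Y=S_0\oplus G$; then $Y^2=f(X)$.

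\textbf{Step 3: show $[X,Y]\ne 0$.} Suppose $X$ and $Y$ commute. Then $Y$ preserves every generalized eigenspace of $X$, in particular $V_{r_i}$. But $u_0\in V_{r_i}$ while $Yu_0=v_0$ is a nonzero vector of $V_{r_j}$ with $j\ne i$, so $v_0\notin V_{r_i}$. This contradiction gives $[X,Y]\ne 0$. Directly: $XYu_0=Xv_0$ has its $V_{r_j}$-component equal to $r_jv_0+(\text{terms in } N_{r_j})$, while $YXu_0=Y(r_iu_0+N_{r_i}u_0)$ does not; the invariant-subspace argument is cleaner.

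The main obstacle is the bookkeeping in Step 1. We need the Jordan chains of $T|_{V_0}$ to be chosen compatibly with the color decomposition, so that the cross-colored edge really pairs a chain in $V_{r_i}$ with a chain in $V_{r_j}$. This holds because $T$ is block diagonal with respect to $V_{r_1}\oplus V_{r_2}\oplus V_{r_3}$, so Jordan bases can be chosen blockwise. We also need the two-chain square root to move $u_0$ off its own color. In the $p=q+1$ case the formulas in Lemma~\ref{lem:two-chain-square-root-construction} still give $Su_0=v_0$ as long as $q\ge 1$, which holds because both blocks have positive size.
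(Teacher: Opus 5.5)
Your proof is correct and is essentially the paper's argument: the two-chain construction of Lemma~\ref{lem:two-chain-square-root-construction} on the cross-colored pair gives a square root sending $u_0\in V_{r_i}$ to $0\ne v_0\in V_{r_j}$, the remaining matched blocks complete it on $V_0$, and no operator commuting with $X$ can do this, since such an operator preserves $V_{r_i}$. You also supply details the paper leaves implicit: the color-adapted Jordan chains of $f(X)|_{V_0}$, the extension to the invertible part via Lemma~\ref{lem:poly-square-root}, and the check $q\ge 1$ in the unequal case. Your ``directly'' aside is loosely phrased, but the invariant-subspace argument you actually use makes it unnecessary.
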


\begin{proof}
Let $(p,i)\sim(q,j)$ be a cross-colored matched pair with $i\ne j$.  The construction of
Lemma~\ref{lem:two-chain-square-root-construction} defines a square root on the direct sum of the
corresponding Jordan chains for $f(X)$.  This square root maps a vector from $V_{r_i}$ to
$V_{r_j}$.  Since any operator commuting with $X$ preserves the generalized eigenspaces of $X$, and
$V_{r_i}\ne V_{r_j}$, the commutator with $X$ is nonzero.  Extending the construction over the
remaining matched blocks gives the desired global square root.
\end{proof}

\begin{theorem}[Color-breaking direction]\label{thm:color-breaking-direction}
Assume that at least two colors occur in $\mathcal P_0(X)$ and that $Y^2=f(X)$ has at least one
solution.  Then the square-root fiber contains a solution $Y$ with $[X,Y]\ne 0$.
\end{theorem}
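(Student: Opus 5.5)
The plan is to start from an arbitrary square root and perturb it by conjugation along a cross-colored direction, rather than to re-engineer the matching combinatorially. By hypothesis there is a global solution $Y_0$ with $Y_0^2=f(X)=A$. If $[X,Y_0]\ne 0$ we are done. Likewise, if the admissible matching supplied by Proposition~\ref{prop:existence-ignores-color} can be chosen with a cross-colored edge, Proposition~\ref{prop:cross-colored-matching} finishes the proof. So assume $[X,Y_0]=0$. Then $Y_0$ preserves each generalized eigenspace $V_{r_1},V_{r_2},V_{r_3}$ of $X$. Fix two colors $i\ne j$ with $V_{r_i},V_{r_j}\ne 0$, and write $T=A|_{V_0}$, $T_i=T|_{V_{r_i}}$, $S_i=Y_0|_{V_{r_i}}$. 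Each $S_i$ is nilpotent, since $S_i^2=T_i$ and $T_i$ is nilpotent.

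\textbf{Conjugation trick.} Let $E$ be any linear map $V_{r_j}\to V_{r_i}$ with $ET_j=T_iE$, extended by zero on every other primary summand of $X$. Then $E^2=0$ and $E\in Z(A)$. Set $g=I+E$, so that $g^{-1}=I-E$ and $g\in Z(A)$. Then $\widetilde Y=gY_0g^{-1}$ satisfies $\widetilde Y^2=gAg^{-1}=A$. Expanding and using $E Y_0 E=0$ (the composite $V_{r_j}\to V_{r_i}\to V_{r_i}\to$ is killed by $E$) gives
\[
\widetilde Y=Y_0+EY_0-Y_0E=Y_0+(ES_j-S_iE).
\]
The correction term maps $V_{r_j}$ into $V_{r_i}$. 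Any operator commuting with $X$ preserves the generalized eigenspaces, so $[X,\widetilde Y]\ne 0$ as soon as $ES_j\ne S_iE$. The theorem is therefore reduced to the following claim: there exists a $k[t]$-module homomorphism $E\colon (V_{r_j},T_j)\to(V_{r_i},T_i)$ that does not intertwine $S_j$ and $S_i$.

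\textbf{The claim.} This is where I expect the main obstacle, and I would argue by cases.

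If $S_i=S_j=0$, then $T_i=T_j=0$, so every linear map $E$ is allowed and no nonzero $E$ intertwines in the wrong way. The conjugation trick then fails to produce anything. In this case, instead take $\widetilde Y=Y_0+F$ with $F$ a rank-one map sending a vector of $V_{r_j}$ to $V_{r_i}$. Then $F^2=0$ and $Y_0F=FY_0=0$ on these summands, so $\widetilde Y^2=A$ and $\widetilde Y$ is visibly cross-colored. This is just the cross-colored $(1,1)$ edge of Proposition~\ref{prop:cross-colored-matching}.

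Otherwise, by symmetry, assume $S_j\ne 0$. Since $S_j$ is nilpotent and nonzero, $\im T_j=\im S_j^2\subsetneq \im S_j$. I first try maps $E$ factoring through $V_{r_j}\to V_{r_j}/\im T_j$, landing in $\ker T_i$; any such $E$ satisfies $ET_j=0=T_iE$. Choose $E$ that is nonzero on some vector of $\im S_j\setminus\im T_j$, so that $ES_j\ne 0$. Then I need $S_iE\ne ES_j$. If $S_iE=ES_j$ for every such $E$, take $E$ to have rank one with image spanned by a vector $w\in\ker T_i$. The relation forces $S_i w\in k w$, hence $S_iw=0$ by nilpotency; it also forces the functional defining $E$ to be $S_j$-invariant in a way incompatible with its being nonzero on $\im S_j$ but zero on $\im T_j$. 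Making that last incompatibility airtight, by choosing $w\in\ker T_i$ and the functional appropriately, is the step I would check most carefully. If it resists, the fallback is the exchange argument on matchings: uncross pairs in the sorted colored partition and break ties among equal sizes so that a color change falls inside a matched pair, then invoke Proposition~\ref{prop:cross-colored-matching}. Finally, $\widetilde Y$ agrees with $Y_0$ off $V_0$, so it is a global square root, which completes the argument.
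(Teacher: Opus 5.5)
Your proof is correct, and the step you flagged closes in one line. Take $Ev=\phi(v)w$ with $\phi|_{\im T_j}=0$, $\phi|_{\im S_j}\neq 0$ and $0\neq w\in\ker T_i$. Then $ES_j=S_iE$ would say $\phi(S_jv)\,w=\phi(v)\,S_iw$ for all $v$. Choosing $v$ with $\phi(v)\neq 0$ gives $S_iw\in kw$, hence $S_iw=0$ by nilpotency. The relation then forces $\phi\circ S_j=0$, a contradiction.

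So a single such $E$ works, and you do not need to quantify over all $E$. It is cleaner still to take $w\in\ker S_i\subseteq\ker T_i$. Then $S_iE=0$ and the correction is $ES_j=w\otimes(\phi\circ S_j)\neq 0$.

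Drop the matching fallback altogether, because it fails in general. For the colored partition $(5,i),(5,i),(1,j)$, every admissible matching is internal. This is precisely the situation the theorem is meant to cover.

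Your route differs from the paper's, which perturbs additively instead of conjugating. With $S_0=Y|_{V_0}$ color-preserving, the paper takes:
\begin{itemize}
\item a nonzero functional $\ell$ on $V_{r_i}$ vanishing on $\im(S_0|_{V_{r_i}})$, extended by zero;
\item a vector $0\neq w\in\ker(S_0|_{V_{r_j}})$.
\end{itemize}
It sets $B(v)=\ell(v)w$ and checks $S_0B=BS_0=B^2=0$. Hence $(S_0+B)^2=S_0^2$, while $S_0+B$ moves color $i$ into color $j$.

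Both ingredients exist for any nilpotent $S_0$. So this is your degenerate-case map $F$ made to work uniformly, with no case split. In fact, with the choice $w\in\ker S_i$ above, your $\widetilde Y=Y_0+w\otimes(\phi\circ S_j)$ is itself one of the paper's perturbations, with $i$ and $j$ interchanged. The reason is that $\phi\circ S_j$ kills $\im S_j$, since $\phi$ kills $\im S_j^2=\im T_j$.

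Your conjugation does buy extra information in the nondegenerate case. Since $\widetilde Y=(I+E)Y_0(I+E)^{-1}$ with $I+E\in Z(A)^{\times}$, the noncommuting root is similar to the given commuting one. By contrast, $S_0+B$ can change Jordan type: two one-dimensional color spaces with $S_0=0$ yield a $J_2$.

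The cost is the separate case $S_i=S_j=0$, which no conjugation argument can avoid. If $Y_0|_{V_0}=0$, every $Z(A)^{\times}$-conjugate of $Y_0$ still vanishes on $V_0$.
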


\begin{proof}
Let $Y$ be a global solution of $Y^2=f(X)$, whose existence is part of the hypothesis.  Since $Y$
commutes with $f(X)$ and hence preserves the zero-primary component $V_0$, the restriction
$S_0:=Y|_{V_0}$ is a square root of $f(X)|_{V_0}$.  If $S_0$
sends some vector from one root-color space $V_{r_i}$ to a different root-color space $V_{r_j}$,
then $S_0$ does not commute with $X$, because every operator commuting with $X$ preserves the
generalized eigenspace decomposition of $X$.

It remains to handle the case in which $S_0$ preserves every color space. Then the restriction of
$S_0$ to each color space is nilpotent, since $S_0^2=f(X)|_{V_0}$ is nilpotent there. Pick two
nonzero color spaces $V_{r_i}$ and $V_{r_j}$ with $i\ne j$. Because a nilpotent endomorphism of a
nonzero finite-dimensional space is not surjective, $\im(S_0|_{V_{r_i}})$ is a proper subspace,
and hence there is a nonzero functional $\ell$ on $V_{r_i}$ annihilating it.  Extend $\ell$ by zero on
all other color spaces.  Because a nilpotent endomorphism has nonzero kernel, choose $0\ne
w\in\ker(S_0|_{V_{r_j}})$. Define
\[
B(v)=\ell(v)w.
\]
The choices of $\ell$ and $w$ then give
\[
S_0B=0,
\qquad
BS_0=0,
\qquad
B^2=0.
\]
Therefore
\[
(S_0+B)^2=S_0^2=f(X)
\]
on $V_0$.  Put $Y_0=S_0+B$ on $V_0$.  The perturbation $B$ is nonzero and maps from color $i$ to
color $j$, so $Y_0$ does not preserve the $X$-primary color decomposition and hence does not commute
with $X$.  Extending $Y_0$ by any square root on the nonzero-primary part of $f(X)$ gives the
required global square root.
\end{proof}

\begin{remark}[Color-breaking deformations]
Theorem~\ref{thm:color-breaking-direction} says that once two root-colors are present in the
singular fiber, the square-root fiber has a color-breaking direction.  This direction may exist even
when a chosen square-root matching is entirely internal to colors.  Thus noncommutativity is not
merely a property of one bad matching: it is a geometric feature of the whole singular square-root
fiber.
\end{remark}

\subsection{The color graph}\label{subsec:colored-graph}

\begin{definition}[Color graph]\label{def:color-graph}
The color graph $\Gamma_0(X)$ has vertices given by the colored blocks $(p,i)\in\mathcal P_0(X)$.
Two vertices are joined when their sizes differ by at most one.  An edge is internal or
cross-colored according as its endpoints have the same or different colors.
\end{definition}

\begin{figure}[h]
\centering
\begin{tikzpicture}[scale=1.15,
  colorone/.style={circle,draw,thick,minimum size=13mm,font=\small,fill=blue!15},
  colortwo/.style={rectangle,draw,thick,minimum size=13mm,font=\small,fill=red!15},
  colorthree/.style={diamond,draw,thick,minimum size=13mm,aspect=1,font=\small,fill=green!15},
  internal/.style={dashed,thick},
  cross/.style={solid,very thick}]

\node[colorone]   (a) at (-2.6, 2.6)  {$(3,1)$};
\node[colorone]   (b) at ( 2.6, 2.6)  {$(2,1)$};
\node[colortwo]   (c) at ( 0,   1.05) {$(3,2)$};
\node[colorthree] (d) at ( 0,  -1.05) {$(2,3)$};
\node[colorthree] (e) at ( 0,  -3.0)  {$(1,3)$};

\draw[internal] (a) -- (b);
\draw[cross]    (a) -- (d);
\draw[cross]    (b) -- (d);
\draw[cross]    (a) -- (c);
\draw[cross]    (b) -- (c);
\draw[cross]    (c) -- (d);
\draw[cross]    (e) to[bend right=25] (b);
\draw[internal] (d) -- (e);

\node[align=left,font=\small,text width=11cm] at (0,-4.2)
{Circles, squares, and diamonds denote root-colors $1$, $2$, $3$.
Dashed edges preserve color, solid edges break color.};
\end{tikzpicture}
\caption{The color graph $\Gamma_0(X)$ for the running color pattern
$(3,1),(2,1),(3,2),(2,3),(1,3)$, drawn with all eight admissibility edges in a
planar embedding.  The two same-color (internal) edges $(3,1)\text{--}(2,1)$ and
$(2,3)\text{--}(1,3)$ are dashed.  The six cross-colored edges $(3,1)\text{--}(3,2)$,
$(2,1)\text{--}(3,2)$, $(3,1)\text{--}(2,3)$, $(2,1)\text{--}(2,3)$, $(3,2)\text{--}(2,3)$,
and $(2,1)\text{--}(1,3)$ are solid.  Each cross-colored edge is a color-breaking edge
and produces square roots that move between root-primary sectors and therefore do not
commute with $X$.
Vertices are shape-coded by root-color (circle, square, diamond) in addition to the
conventional fill colors, so the figure remains readable in monochrome print.  The two
non-edges $(3,1)\text{--}(1,3)$ and $(3,2)\text{--}(1,3)$ correspond to size differences
greater than one and are therefore inadmissible.}
\label{fig:colored-singular-fiber-graph}
\end{figure}

\begin{remark}[Combinatorial summary]
Figure~\ref{fig:colored-singular-fiber-graph} illustrates the convention.  The graph $\Gamma_0(X)$
separates three questions.  Existence asks whether the uncolored vertices admit a perfect matching,
allowing singleton vertices of size $1$.  Lemma~\ref{lem:two-chain-square-root-construction} gives
noncommutativity from cross-colored matched edges.  The color-breaking theorem says that, after
existence, the mere presence of at least two colors already guarantees a noncommuting direction,
even if the selected matching uses only internal edges.
\end{remark}

\begin{example}[A color-breaking example with internal and cross edges]\label{ex:color-breaking-colored}
Suppose the colored zero partition contains
\[
(3,1),\ (2,1),\ (3,2),\ (2,3),\ (1,3).
\]
The uncolored shadow is $(3,3,2,2,1)$, which is square-root pairable by matching $(3,3)$, $(2,2)$,
and leaving the singleton $(1)$.  If the two size-$3$ blocks have colors $1$ and $2$, the matching
$(3,1)\sim(3,2)$ is cross-colored and directly yields a noncommuting square root.  Even if one
chooses only internal pairings where possible, the presence of more than one color activates
Theorem~\ref{thm:color-breaking-direction}, so the square-root fiber still contains a color-breaking
deformation.
\end{example}

\begin{remark}[Effectiveness]\label{rem:effectiveness}
The classification is effective: given $X$, computing the Jordan partitions $\eta_{r_i}$, forming
the colored zero partition $\mathcal P_0(X)$, and testing whether at least two colors occur run in
polynomial time in $m$ over any field in which Jordan form and root extraction for $f$ are
polynomial-time computable.  Over $\overline{\mathbb F}_q$ in characteristic different from $2$ and
$3$ the colored data may be further refined by the Frobenius action on the roots of $f$
(cf.\ Section~\ref{sec:further-directions}).
\end{remark}

\section{Global classification after existence}\label{sec:global-classification-after-existence}

The next theorem packages the preceding noncommuting mechanisms into a single fiberwise
classification once existence of a square root is known.

\begin{lemma}[Polynomial recovery dichotomy]\label{lem:polynomial-recovery-dichotomy}
Let $A=f(X)$.  Then $X\in k[A]$ if and only if $f$ is injective on $\Spec(X)$ and, whenever
$f'(\lambda)=0$ for an eigenvalue $\lambda$ of $X$, the nilpotent part of $X$ on $V_\lambda$ is
zero.
\end{lemma}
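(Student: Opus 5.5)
We argue primary component by primary component and patch with the Chinese remainder theorem.

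\textbf{Step 1: reduction to $A$-primary components.} For $c\in\Spec(A)$ let $W_c=\bigoplus_{\lambda\in\Spec(X),\,f(\lambda)=c}V_\lambda$. This is the generalized $c$-eigenspace of $A$, since $f(X)|_{V_\lambda}$ has the single eigenvalue $f(\lambda)$. The spectral projections of $A$ lie in $k[A]$, and every element of $k[A]$ preserves each $W_c$. By the Chinese remainder theorem applied to the coprime factors $(t-c)^{\nu_c}$ of the minimal polynomial of $A$, we have $X\in k[A]$ if and only if $X|_{W_c}\in k[A|_{W_c}]$ for every $c$. So it suffices to treat a single $W_c$.

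\textbf{Step 2: necessity.} On $W_c$, any $g(A)$ has the single eigenvalue $g(c)$. If $f(\lambda)=f(\mu)=c$ with $\lambda\ne\mu$, then $X|_{W_c}$ has two distinct eigenvalues, so $X|_{W_c}\notin k[A|_{W_c}]$. This forces injectivity of $f$ on $\Spec(X)$.

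Now suppose $W_c=V_\lambda$ with $f'(\lambda)=0$ and $N=N_\lambda\ne 0$. The Taylor expansion in the proof of Proposition~\ref{prop:local-jordan-transform} gives $A|_{V_\lambda}-cI=N^2U(N)$, where $U(N)$ is invertible or, in the case $\lambda=0$, $a=0$, equals $N$. Hence every $g(A)|_{V_\lambda}$ has the form $g(c)I+N^2h(N)$. If this equaled $\lambda I+N$, comparing eigenvalues would give $g(c)=\lambda$, and then
\[
N(I-Nh(N))=0.
\]
Since $I-Nh(N)$ is invertible, this forces $N=0$, a contradiction.

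As a cross-check, Lemma~\ref{lem:critical-collapse-reduction} also yields an idempotent in $Z(A)\setminus Z(X)$ here, and this is incompatible with $X\in k[A]$.

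\textbf{Step 3: sufficiency.} Under the hypotheses each $W_c$ equals a single $V_\lambda$. There are two cases.

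\begin{itemize}
\item If $f'(\lambda)=0$, then $N_\lambda=0$ by hypothesis, so $X|_{V_\lambda}=\lambda I$, which is the constant polynomial $\lambda$ evaluated at $A$.
\item If $f'(\lambda)\ne0$, then $A|_{V_\lambda}-cI=NU(N)$ with $U(0)=f'(\lambda)\ne0$. As in the proof of Lemma~\ref{lem:critical-collapse-reduction}, the series $tU(t)$ has a compositional inverse $h(s)$ in $k[[s]]$ with $h(0)=0$. Truncating modulo $s^\nu$, where $\nu$ is the nilpotency index of $N$, gives a polynomial with $N=h(A|_{V_\lambda}-cI)$. Therefore $X|_{V_\lambda}=\lambda I+h(A-cI)|_{V_\lambda}$.
\end{itemize}

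Patching these local polynomials by Step 1 produces a single $g$ with $g(A)=X$.

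The only step needing any care is the nilpotent-ideal argument in the critical case. The key point is that membership in $k[A]$ on $V_\lambda$ confines the nilpotent part to the ideal $N^2k[N]$, and $N$ itself lies outside that ideal unless $N=0$. Everything else is routine CRT bookkeeping.
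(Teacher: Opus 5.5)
Your proof is correct and follows essentially the same route as the paper. You recover $X$ locally on each primary summand: by compositional inversion of $tU(t)$ when $f'(\lambda)\ne 0$, and, when $f'(\lambda)=0$, by the observation that $k[A|_{V_\lambda}]\subseteq kI+N^2k[N]$. You then rule out spectral collisions by an eigenvalue comparison and patch with the Chinese remainder theorem. The differences are only organizational: you reduce to the $A$-primary components $W_c$ up front, and your identity $N(I-Nh(N))=0$ gives a cleaner, fully rigorous version of the paper's ``no linear term in $N$'' step.
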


\begin{proof}
With the primary decomposition $V=\bigoplus_{\lambda\in\Spec(X)}V_\lambda$ and
$X|_{V_\lambda}=\lambda I+N_\lambda$ from Section~\ref{sec:setup}, the operator $A=f(X)$ commutes
with $X$, so each $V_\lambda$ is stable under $A$.  On $V_\lambda$ one has
\[
A|_{V_\lambda}=f(\lambda)I+c_\lambda N_\lambda^{d_\lambda}U_\lambda(N_\lambda),
\]
where $d_\lambda$ is the order of vanishing of $f(t)-f(\lambda)$ at $t=\lambda$, $c_\lambda\ne 0$,
and $U_\lambda(0)\ne 0$.  Because $f$ is cubic with $\operatorname{char} k\ne 2,3$, the value
$d_\lambda$ lies in $\{1,2,3\}$, with $d_\lambda=1$ when $f'(\lambda)\ne 0$, $d_\lambda=2$ when
$f'(\lambda)=0$ but $f''(\lambda)\ne 0$, and $d_\lambda=3$ only when $f''(\lambda)=0$ as well, which
forces $f(t)=t^3+b$ and $\lambda=0$.  If $d_\lambda=1$, then $N_\lambda\in k[A|_{V_\lambda}]$ by inversion in the
truncated algebra, so $X|_{V_\lambda}\in k[A|_{V_\lambda}]$.  If $d_\lambda>1$ and $N_\lambda\ne 0$,
then every element of $k[A|_{V_\lambda}]$ is a polynomial in
$N_\lambda^{d_\lambda}U_\lambda(N_\lambda)$ and has no term with nonzero linear coefficient in
$N_\lambda$, and hence $N_\lambda\notin k[A|_{V_\lambda}]$, so local recovery fails.  If $N_\lambda=0$,
local recovery is trivial.

It remains to compare different primary summands.  If two distinct eigenvalues $\nu\ne\theta$
satisfy $f(\nu)=f(\theta)=\rho$, then the $\rho$-primary component of $A$ contains both $V_\nu$ and
$V_\theta$.  For every polynomial $h$, the operator $h(A)$ has the same scalar part $h(\rho)$ on
both quotients
\[
V_\nu/(A-\rho I)V_\nu
\qquad\text{and}\qquad
V_\theta/(A-\rho I)V_\theta .
\]
It therefore cannot induce multiplication by $\nu$ on the first quotient and by $\theta$ on the
second.  Hence $X\notin k[A]$ whenever such a spectral collision occurs.  Conversely, if the values
$f(\nu)$ are pairwise distinct and each local recovery condition holds, the local polynomials may be
patched by the Chinese remainder theorem for the pairwise coprime primary factors of the minimal
polynomial of $A$.  This gives a single polynomial $g$ with $g(A)=X$.
\end{proof}

\begin{lemma}[Spectral-collision idempotents]\label{lem:spectral-collision-idempotents}
Suppose $\lambda\ne\nu$ are eigenvalues of $X$ with $f(\lambda)=f(\nu)=\mu$, and let $W$ be the
$\mu$-primary component of $A=f(X)$.  Then there is an idempotent $P_W\in Z(A_W)\setminus Z(X_W)$,
and extending $P_W$ by zero on the other $A$-primary components gives an idempotent $P\in
Z(A)\setminus Z(X)$.  (The lemma is stated for all $\mu$.  It is invoked downstream only when
$\mu\ne 0$, where the localized idempotent twist of
Lemma~\ref{lem:localized-idempotent-twist} applies.)
\end{lemma}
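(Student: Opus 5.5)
The idempotent will be the spectral projection of $X$ onto $V_\lambda$ inside $W$. Since $A=f(X)$ commutes with $X$, every generalized eigenspace $V_\theta$ of $X$ is $A$-stable. On $V_\theta$ the operator $A$ has the single eigenvalue $f(\theta)$. Hence the $\mu$-primary component of $A$ is
\[
W=\bigoplus_{\theta\in\Spec(X),\,f(\theta)=\mu}V_\theta ,
\]
and this sum contains both $V_\lambda$ and $V_\nu$ as nonzero summands. In particular $W$ is $X$-stable and $X_W=X|_W$ is defined.

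The first choice of $P_W$, the projection onto $V_\lambda$ along the other $V_\theta$, is useless: it is a polynomial in $X_W$, so it commutes with $X_W$. So $P_W$ must mix the summands $V_\lambda$ and $V_\nu$. Concretely:
\begin{enumerate}[label=\textup{(\arabic*)}]
\item Choose $A$-stable decompositions of $V_\lambda$ and $V_\nu$ into Jordan chains of the nilpotent operator $A-\mu I$. In particular, pick a cyclic $A$-stable summand $C_\lambda\subseteq V_\lambda$ of largest chain length $p$ and a cyclic summand $C_\nu\subseteq V_\nu$ of chain length $q$, and suppose $p\ge q$.
\item Let $\phi\colon V_\lambda\to V_\nu$ be an $A$-equivariant map, zero on the complement of $C_\lambda$, sending the generator of $C_\lambda$ onto the generator of $C_\nu$. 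This is legitimate because $p\ge q$: the generator of $C_\lambda$ is killed by $(A-\mu I)^p$, hence by $(A-\mu I)^q$ after composing, so the map respects the relations. (Symmetrically, map from $V_\nu$ into $V_\lambda$ if $q>p$.) Since $\phi$ intertwines $A$, we get $[A_W,\phi]=0$, with $\phi$ extended by zero to $W$ and viewed as an endomorphism of $W$. Also $\phi^2=0$, and $\phi\ne 0$.
\item Set $P_W=E_\lambda+\phi$, where $E_\lambda$ is the $X$-spectral projection of $W$ onto $V_\lambda$. From $E_\lambda\phi=0$ and $\phi E_\lambda=\phi$ one gets $P_W^2=E_\lambda+\phi=P_W$. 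It commutes with $A_W$ because both terms do.
\end{enumerate}

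Next, verify $P_W\notin Z(X_W)$. Any operator commuting with $X_W$ preserves each $V_\theta$, and $E_\lambda$ does. But $P_W$ sends a nonzero vector of $V_\lambda$ to a vector with nonzero $V_\nu$-component, so it fails to preserve $V_\lambda$. This gives $P_W\in Z(A_W)\setminus Z(X_W)$.

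For the extension, let $P=P_W\oplus 0$ on $W\oplus W'$, where $W'$ is the sum of the other $A$-primary components. Both pieces are idempotent, so $P$ is idempotent. It commutes with $A=A_W\oplus A_{W'}$ blockwise. It fails to commute with $X=X_W\oplus X_{W'}$ because it already fails on $W$.

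The main obstacle is step (2): making sure that a nonzero $A$-equivariant map $V_\lambda\to V_\nu$ exists. This follows from the elementary fact that $\operatorname{Hom}_{k[t]}(k[t]/(t^p),k[t]/(t^q))\neq 0$ for all $p,q\ge 1$. Even more simply, one can send a vector of the socle $\ker(A-\mu I)\cap C_\lambda$ to a vector of the socle of $C_\nu$ and kill everything else. This is equivariant because both vectors are annihilated by $A-\mu I$ and the map kills $(A-\mu I)V_\lambda$, provided the socle vector is chosen outside the image of $A-\mu I$. That is possible on a chain of length $1$. In general, the generator-to-generator construction above is the safe route.
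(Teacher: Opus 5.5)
Your proposal is correct and is essentially the paper's construction: your $P_W=E_\lambda+\phi$ is exactly the paper's projection $(u,v)\mapsto(u,\phi u)$ onto the graph of an $A$-intertwiner $\phi\colon V_\lambda\to V_\nu$. There are two small differences. First, the paper's choice $1\mapsto z^{\max\{s-r,0\}}$ avoids your case split $p\ge q$ versus $q>p$. Second, the paper proves $P_W\notin Z(X_W)$ by writing the block $(\lambda-\nu)\phi+\phi N_\lambda-N_\nu\phi$ of the commutator as the image of $\phi\ne 0$ under an invertible operator; your argument, that $P_W$ fails to preserve the generalized eigenspace $V_\lambda$, is shorter and equally valid.

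One wording fix in step (2): the relation to check is that the image $g_\nu$ satisfies $(A-\mu I)^p g_\nu=0$, which holds because $(A-\mu I)^q g_\nu=0$ and $p\ge q$. Your sentence states this implication backwards, but the condition $p\ge q$ that you impose is the right one.
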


\begin{proof}
Let $W$ be the $\mu$-primary component of $A$.  Since $A=f(X)$, the space $W$ is the direct sum of
the $X$-primary subspaces whose eigenvalues map to $\mu$.  In particular, it contains the
$X$-primary summands $V_\lambda$ and $V_\nu$, and all these summands are $A$-stable.

On $V_\lambda$ and $V_\nu$, set $z=A-\mu I$, which is nilpotent on both spaces.  Pick one
$z$-Jordan chain of length $r$ in $V_\lambda$ and one of length $s$ in $V_\nu$.  Viewing the two
chains as $k[z]/(z^r)$ and $k[z]/(z^s)$, the element $z^{\max\{s-r,0\}}\in k[z]/(z^s)$ is killed by
$z^r$, so the assignment $1\mapsto z^{\max\{s-r,0\}}$ defines a nonzero $k[z]$-module map.  Extend
by zero on the remaining $z$-Jordan chains of $V_\lambda$.  This gives a nonzero $k$-linear map
\[
\phi:V_\lambda\to V_\nu
\]
with $\phi A|_{V_\lambda}=A|_{V_\nu}\phi$.

On $V_\lambda\oplus V_\nu$, define
\[
P_W(u,v)=(u,\phi u).
\]
This is the projection onto the graph of $\phi$ along $V_\nu$.  Extend $P_W$ by zero on the other
$A$-stable summands of $W$.  Then $P_W^2=P_W$, and $P_W$ commutes with $A_W$ because $\phi$
intertwines the two restrictions of $A$.  Extending $P_W$ by zero on the other $A$-primary
components gives an idempotent $P\in Z(A)$.

It remains to check that $P_W$, and hence $P$, does not commute with $X$.  Write
\[
X|_{V_\lambda}=\lambda I+N_\lambda,
\qquad
X|_{V_\nu}=\nu I+N_\nu .
\]
The lower-left block of $P_WX_W-X_WP_W$ is
\[
\phi X|_{V_\lambda}-X|_{V_\nu}\phi=(\lambda-\nu)\phi+\phi N_\lambda-N_\nu\phi .
\]
The linear operator $\psi\mapsto \psi N_\lambda-N_\nu\psi$ on
$\operatorname{Hom}_k(V_\lambda,V_\nu)$ is nilpotent because left multiplication by $N_\nu$ and
right multiplication by $N_\lambda$ are commuting nilpotent operators.  Hence
$(\lambda-\nu)I+(\psi\mapsto \psi N_\lambda-N_\nu\psi)$ is invertible on
$\operatorname{Hom}_k(V_\lambda,V_\nu)$.  The displayed lower-left block is the value of this
invertible operator at $\phi$.  Since $\phi\ne 0$, it is nonzero, so $P_W\notin Z(X_W)$ and $P\notin
Z(X)$.
\end{proof}

\begin{lemma}[Critical-collapse idempotents]\label{lem:critical-collapse-idempotents}
Suppose $X=\lambda I+N$ on a generalized eigenspace $V_\lambda$, where $N\ne 0$ and
$f'(\lambda)=0$.  Let $W$ be the $f(\lambda)$-primary component of $A=f(X)$.  Then there exists an
idempotent $P_W\in Z(A_W)\setminus Z(X_W)$.  (Since $f$ is square-free, $f'(\lambda)=0$ already
forces $f(\lambda)\ne 0$, so $W$ is a nonzero-$A$-primary component.)
\end{lemma}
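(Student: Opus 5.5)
The plan is to build the idempotent on the single generalized eigenspace $V_\lambda$ using Lemma~\ref{lem:critical-collapse-reduction}, and then extend it by zero to the rest of $W$. Since $f'(\lambda)=0$, the Taylor expansion in the proof of Proposition~\ref{prop:local-jordan-transform} gives
\[
A|_{V_\lambda}=f(\lambda)I+N^{d}U(N),
\]
with $d\in\{2,3\}$ and $U(N)\in k[N]$ invertible. Concretely, $U(N)=3\lambda I+N$ when $d=2$, which is invertible because $f''(\lambda)=6\lambda\ne0$. When $d=3$ we have $U=I$, which happens only for $f=t^3+b$ and $\lambda=0$.

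\textbf{Step 1: verify the hypotheses of the reduction lemma.} We need $\operatorname{char}k\nmid d$, which holds because $\operatorname{char}k\ne 2,3$. We also need $N$ to have a Jordan block of size at least $2$, which holds because $N\ne 0$. Lemma~\ref{lem:critical-collapse-reduction} then yields an idempotent $P_\lambda$ on $V_\lambda$ that commutes with $H(N)=N^dU(N)$, and hence with $A|_{V_\lambda}=f(\lambda)I+H(N)$, but does not commute with $N$. It therefore does not commute with $X|_{V_\lambda}=\lambda I+N$.

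\textbf{Step 2: extend to $W$.} The $f(\lambda)$-primary component is $W=\bigoplus_{f(\nu)=f(\lambda)}V_\nu$, a direct sum of $X$-stable and $A$-stable summands. Define $P_W=P_\lambda\oplus 0$ on the remaining summands. This is idempotent. It commutes with $A_W$ because $A_W$ is block diagonal for this decomposition and $P_\lambda$ commutes with the $V_\lambda$ block. Likewise $X_W$ is block diagonal, so $[X_W,P_W]$ restricted to $V_\lambda$ equals $[X|_{V_\lambda},P_\lambda]\ne0$. Hence $P_W\in Z(A_W)\setminus Z(X_W)$.

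\textbf{Step 3: the parenthetical claim.} If $f'(\lambda)=0$ and $f(\lambda)=0$, then $\lambda$ would be a repeated root of $f$, contradicting square-freeness. So $f(\lambda)\ne0$ and $A_W$ is invertible, which is what later allows the localized idempotent twist.

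\textbf{Main obstacle.} The only delicate point is the case split on $d$, namely confirming that $U$ is invertible with $\operatorname{char}k\nmid d$ in both cases. This is already recorded in the proof of Lemma~\ref{lem:polynomial-recovery-dichotomy}, so the argument is essentially bookkeeping.
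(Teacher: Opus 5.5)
Your proposal is correct and takes the same route as the paper. Both write $A|_{V_\lambda}-f(\lambda)I=N^dU(N)$ with $d\in\{2,3\}$, apply Lemma~\ref{lem:critical-collapse-reduction} with $r=d$ to get an idempotent on $V_\lambda$ commuting with $A|_{V_\lambda}$ but not with $N$, and extend it by zero over the remaining $X$-primary summands of $W$. You are slightly more explicit than the paper: you identify $U$ concretely, check that $N\ne 0$ supplies a Jordan block of size at least $2$, and verify the parenthetical claim.
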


\begin{proof}
Locally,
\[
f(\lambda+t)-f(\lambda)=t^d u(t),\qquad d\in\{2,3\},\qquad u(0)\ne 0.
\]
In particular, $d\le 3$ because $f$ is cubic, and the nilpotency index of $N$ on $V_\lambda$ is
bounded by $\dim V_\lambda$, so the formal series $v$ below truncates after finitely many terms.
By Lemma~\ref{lem:critical-collapse-reduction}, applied with $H(N)=f(\lambda+N)-f(\lambda)$ and
$r=d$, there is a nilpotent matrix $M=Nv(N)$ such that $k[M]=k[N]$ and
\[
f(X)=f(\lambda)I+M^d.
\]
The same lemma also gives an idempotent $P_\lambda$ on $V_\lambda$ commuting with $M^d$, and hence
with $A|_{V_\lambda}$, but not with $N$ and therefore not with $X|_{V_\lambda}$.  The $A$-primary
component $W$ is a direct sum of $X$-primary subspaces whose eigenvalues map to $f(\lambda)$.  This
decomposition is $A$-stable because $A=f(X)$ preserves the primary decomposition of $X$.  Extending
$P_\lambda$ by zero on the remaining $X$-primary summands inside $W$ therefore still gives an
idempotent commuting with $A_W$.  Since its restriction to $V_\lambda$ does not commute with
$X|_{V_\lambda}$, the extended idempotent satisfies $P_W\in Z(A_W)\setminus Z(X_W)$.
\end{proof}

\begin{proof}[Proof of Theorem~\ref{thm:fiberwise-forced-commutativity-intro}]
If $X\in k[f(X)]$, then every square root of $f(X)$ commutes with $f(X)$ and therefore with $X$.
Conversely, assume $X\notin k[f(X)]$.  By Lemma~\ref{lem:polynomial-recovery-dichotomy}, either there
is a spectral collision or there is a nontrivial critical block, and these are the only ways polynomial
recovery can fail.

A zero-valued collision gives a multicolor singular zero-primary component, so
Theorem~\ref{thm:color-breaking-direction} gives a noncommuting square root.  A nonzero-valued
collision gives an idempotent $P_W\in Z(A_W)\setminus Z(X_W)$ on the corresponding nonzero
$A$-primary component by Lemma~\ref{lem:spectral-collision-idempotents}, and since the global
square-root fiber is nonempty by hypothesis, the localized idempotent twist,
Lemma~\ref{lem:localized-idempotent-twist}, gives a noncommuting square root.  Finally, a nontrivial
critical block gives an idempotent on the corresponding $A$-primary component by
Lemma~\ref{lem:critical-collapse-idempotents}.  A critical point cannot be a root because $f$ is
square-free, so its critical value is nonzero.  Thus the relevant component is a nonzero $A$-primary
component, and Lemma~\ref{lem:localized-idempotent-twist} again gives a noncommuting square root.
\end{proof}

\begin{remark}[Role of the cross-root criterion]
Cross-root singular noncommutativity is a useful explicit subcase of
Theorem~\ref{thm:color-breaking-direction}.  The rank-one deformation theorem above removes the need
to assume that a chosen admissible square-root pairing already crosses colors.
\end{remark}

\begin{remark}[Higher-degree polynomials]\label{rem:higher-degree}
The proof structure above does not use the cubic degree of $f$ in any essential way beyond two
points: the explicit critical-collapse exponent $d\in\{2,3\}$ in
Lemma~\ref{lem:critical-collapse-idempotents}, and the explicit cross-root counting in
Section~\ref{sec:colored-singular-fibers}, where the number of root-colors is $\deg f$.  For an
arbitrary square-free polynomial $f\in k[t]$ of degree at least $2$ over an algebraically closed
field with $\operatorname{char} k\nmid d!$ for every critical multiplicity $d$, the same dichotomy
holds: when the square-root fiber $\{Y:Y^2=f(X)\}$ is nonempty, every such $Y$ commutes with $X$
if and only if $X\in k[f(X)]$.  The cubic case is the smallest setting in which all three
mechanisms (spectral collision, critical collapse, multicolor zero fiber) appear, and the result
is stated in this minimal form to keep notation transparent.  In particular, the
degenerate case $\deg f=2$ does not exhibit the full dichotomy: a square-free quadratic has at
most one critical point and exactly two roots, so the multicolor zero-fiber and critical-collapse
mechanisms cannot interact in a way that activates all three obstruction types simultaneously.
\end{remark}

\begin{theorem}[Over a general field]\label{thm:general-field}
Let $K$ be a field with $\operatorname{char} K\ne 2,3$, let $f\in K[t]$ be a square-free cubic, and
let $X\in M_m(K)$.  Set $A=f(X)$.  Assume that the equation $Y^2=A$ has a solution over $K$.  If
$X\in K[A]$, then every $K$-rational square root of $A$ commutes with $X$.  If $X\notin K[A]$ and
in addition every square root of $A$ in $M_m(\bar K)$ is already defined over $K$ (e.g., when $A$
is split over $K$ in the sense of Higham \cite{HighamFunctions}), then the fiber
$\{Y\in M_m(K):Y^2=A\}$ contains an element $Y$ with $[X,Y]\ne 0$.
\end{theorem}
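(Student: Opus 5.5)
Both directions will be reduced to the algebraically closed case, Theorem~\ref{thm:fiberwise-forced-commutativity-intro}, by base change to $\bar K$. The forward direction should be direct. Suppose $X\in K[A]$ and $Y\in M_m(K)$ satisfies $Y^2=A$. By Lemma~\ref{lem:centralizer-containment}, $Y$ commutes with $A$, hence with every polynomial in $A$ with coefficients in $K$, and in particular with $X$. This argument uses no field hypothesis at all, so it holds over $K$ verbatim; Thompson's theorem is not needed.

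For the converse, suppose $X\notin K[A]$. The first step is to show that $X\notin \bar K[A]$. The condition $X\in K[A]$ says that $X$ lies in the $K$-span of $I,A,\ldots,A^{m-1}$, which is a linear system with coefficients in $K$. A linear system over $K$ is solvable over $\bar K$ if and only if it is solvable over $K$, because rank is unchanged by field extension. Hence $X\notin \bar K[A]$, where $\bar K[A]=\bar K\otimes_K K[A]$.

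Next, I would view $X$ and $f$ over $\bar K$. The polynomial $f$ stays square-free over $\bar K$: its discriminant is nonzero, or equivalently $\gcd(f,f')=1$ holds over $K$ and is preserved by extension. The square-root fiber over $\bar K$ is nonempty because it contains the given $K$-rational solution. After the harmless translation $t\mapsto t-c/3$ (allowed since $\operatorname{char}\ne 3$), which changes neither $K[X]$ nor $K[A]$, $f$ is in the short form $t^3+at+b$ up to a nonzero leading scalar. That scalar can be absorbed, because the arguments of Sections~\ref{sec:idempotent-twist}--\ref{sec:global-classification-after-existence} use only square-freeness and the degree. Theorem~\ref{thm:fiberwise-forced-commutativity-intro} then produces $Y\in M_m(\bar K)$ with $Y^2=A$ and $[X,Y]\ne 0$. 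By the added hypothesis that every square root of $A$ over $\bar K$ is already defined over $K$, this $Y$ lies in $M_m(K)$, and it is the required element.

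The main obstacle is the normalization. The paper's theorem is stated for monic depressed cubics, while a general square-free cubic over $K$ need not have that form. I would first check that an affine substitution $t\mapsto \alpha t+\beta$ together with scaling of $f$ preserves $K[A]$ and $K[X]$. I would then argue that scaling $A$ by a nonzero constant $c$ changes square roots only by the factor $\sqrt{c}\in\bar K$, which does not affect commutation with $X$. The splitting hypothesis is then needed only for the final descent step, since the fiber over $K$ is unaffected by any change of variables made over $\bar K$ purely for the proof.
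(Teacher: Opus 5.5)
Your proof is correct and follows the paper's route. The forward direction is Lemma~\ref{lem:centralizer-containment} combined with $X\in K[A]$. The converse passes to $\bar K$, applies the algebraically closed classification, and descends via the rationality hypothesis. Your extra checks fill in details the paper's proof leaves implicit: that $X\notin\bar K[A]$ follows from invariance of rank under field extension, and that a general square-free cubic reduces to the monic depressed form by translating $X$ and rescaling square roots by $\sqrt{c}$.
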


\begin{proof}
The forward direction is identical to the proof of
Theorem~\ref{thm:fiberwise-forced-commutativity-intro} above: if $X\in K[A]$, then
Lemma~\ref{lem:centralizer-containment} forces every square root in $Z(A)\subseteq Z(X)$.  For the
converse, the constructions in Section~\ref{sec:colored-singular-fibers} produce a noncommuting
square root $Y$ in $M_m(\bar K)$.  Under
the rationality hypothesis on the square-root fiber, this $Y$ lies in $M_m(K)$.  Without the
rationality hypothesis the construction may produce a square root defined only over a quadratic
extension.  The obstruction to rationality is then governed by the splitting field of the
zero-primary nilpotent part of $A$ and is unrelated to the commutativity question.
\end{proof}

\section{Further directions}\label{sec:further-directions}

The classification suggests two follow-up problems, neither of which is used in the proof above.
The first is arithmetic.  Over a finite field $\mathbb F_q$ of characteristic different from $2$ and
$3$, for a square-free cubic $f\in\mathbb F_q[t]$, one can compare the full count
\[
N_m^{\mathrm{all}}(q)=\#\{(X,Y)\in M_m(\mathbb F_q)^2:Y^2=f(X)\}
\]
with its commuting part
\[
N_m^{\mathrm{com}}(q)=\#\{(X,Y):Y^2=f(X),\ [X,Y]=0\},
\qquad
N_m^{\mathrm{nc}}(q)=N_m^{\mathrm{all}}(q)-N_m^{\mathrm{com}}(q).
\]
Existing finite-field matrix-point counts on elliptic curves usually impose commutativity on the
matrix coordinates \cite{AllenDavidLennonOnoSaad,BlaserBradleyVargasXing}.  The results above
suggest refining the noncommuting excess $N_m^{\mathrm{nc}}(q)$ by the Frobenius action on the roots
of $f$ and by the colored Jordan data of the zero-primary sector.  As a concrete consequence
of the main theorem, every $X\in M_m(\mathbb F_q)$ in the polynomial-recovery locus contributes
zero to $N_m^{\mathrm{nc}}(q)$, so the only contributions arise from $X$ realizing a spectral
collision, a critical collapse, or a multicolor zero-primary fiber, and the Frobenius action on the
roots of $f$ controls which of these mechanisms is available over $\mathbb F_q$ as opposed to over
the algebraic closure.

Over $\mathbb C$ the analogous question for self-adjoint $X$ reduces, via the spectral theorem, to a condition on the eigenspace lattice of $X$ relative to $A$ and does not produce new mechanisms beyond those classified above.

\begin{remark}[Cuspidal and nodal degenerations]\label{rem:cuspidal-nodal}
The paper assumes $f$ square-free, equivalently $4a^3+27b^2\ne 0$.  Allowing $f$ to acquire a
repeated root degenerates the scalar curve $y^2=f(x)$ to a cuspidal or nodal cubic, depending on
whether $f$ has a triple or double root.  Such degenerations break some of the auxiliary lemmas
(for instance, $f'$ acquires a common factor with $f$, and the critical-collapse exponent
$d\in\{2,3\}$ is no longer the local order of vanishing of $f(t)-f(\lambda)$ in the relevant
sense).  The classification statement still has a meaningful analogue, but the explicit witness
constructions require additional care.  The non-square-free case is therefore left for separate
treatment.
\end{remark}

\section{Relation to existing work}\label{sec:related-work-novelty}

The equation studied in this paper is the matrix version of a short-Weierstrass elliptic cubic
$E_f:y^2=f(x)$, with matrix variables substituted for the scalar coordinates and without imposing
commutativity. This convention is standard for the scalar curve when $f$ is a square-free cubic in
characteristic different from $2$ and $3$ \cite{SilvermanArithmetic}. The point of comparison is
important: many ingredients below are classical, while the present contribution is the fiberwise
commutativity classification for the matrix equation $Y^2=f(X)$.

\subsection*{Scope of the contribution}
The square-root existence criterion, the Jordan-form reductions, and the polynomial functional
calculus used below are standard inputs.  The new point is the relative question: after a square
root of $f(X)$ exists, does the entire square-root fiber lie in $Z(X)$?

\subsection*{The representation variety of $\nu_f$ and matrix factorizations}
In algebraic language, a solution pair $(X,Y)\in\mathcal M_m(f)$ is a finite-dimensional
representation of the noncommutative $k$-algebra
\[
 \nu_f=k\langle x,y\rangle/(y^2-f(x)),
\]
and the commuting locus $\mathcal C_m(f)$ corresponds to representations that factor through the
abelianization $k[x,y]/(y^2-f(x))$, the affine coordinate ring of the elliptic curve $E_f$.  The
main theorem describes the obstruction to such factorization in terms of polynomial recovery,
placing the result in the representation-variety setting natural to the Journal of Algebra.  The
present setup is distinct from matrix factorizations of a polynomial in the sense of
\cite{EisenbudMF}, where one studies polynomial-entry matrices $\Phi,\Psi$ with $\Phi\Psi=\Psi\Phi=fI$,
and from polynomial-matrix factorization problems such as \cite{PolynomialMatrixFactorizations}.
It is also separate from the theory of free and noncommutative functions evaluated on tuples of
matrices \cite{AglerMcCarthyNCFunctions}, and from Sklyanin-algebra representation problems
associated with elliptic data \cite{WaltonSklyanin}.  Here a scalar polynomial $f$ is evaluated at
one finite-dimensional matrix $X$, and the problem is to classify when the fiber $\{Y:Y^2=f(X)\}$
is contained in $Z(X)$.  Equivalently, the main theorem describes the locus of points $X\in
M_m(k[t])$ over which the fiber of the projection $\mathcal M_m(f)\to M_m(k[t])$, $(X,Y)\mapsto X$,
lies entirely in the abelianization stratum $\mathcal C_m(f)$, namely the polynomial-recovery
locus $\{X:X\in k[f(X)]\}$.

\subsection*{Matrix roots and the $f(X)=A$ lineage}\label{subsec:fX-equals-A-lineage}
The existence theory for square roots of a fixed matrix has standard references
\cite{HighamFunctions,JohnsonOkuboReams,BjorkHammarling,HornJohnsonMatrixAnalysis,Gantmacher} and
modern structured variants \cite{HNonnegativeSquareRoots}: nonsingular Jordan blocks admit square
roots by finite binomial expansions, and singular existence is governed by the nilpotent
Jordan-block pairing criterion.  These results are inputs to this paper.  Centralizers and
polynomial-equivalence questions provide the natural language for recovery
\cite{XiZhangCentralizer,ZhangZhuPolynomialEquivalence}.  Lemma~\ref{lem:centralizer-containment}
gives $Y\in Z(f(X))$, so the distinction between $Z(X)$ and $Z(f(X))$ controls the possibility of
noncommutativity.  More broadly, a related body of work studies the equation $f(X)=A$ with $A$
fixed and $X$ as the unknown: Sylvester's interpolation formula, Frobenius's commutativity
criterion \cite{Gantmacher,HighamFunctions}, the polynomial-in-$A$ solvability criterion of
W.\,E. Roth \cite{Roth1928} for $P(X)=A$, Thompson's classification of $B=f(A)$ and the
centralizer-equality theorem \cite{Thompson1969}, the explicit complex-square-root constructions of
Cross and Lancaster \cite{CrossLancaster1974}, the generalized-inverse analysis of Drazin
\cite{Drazin2007}, Wadsworth's commuting-pair structure theorem \cite{Wadsworth1990}, and the
recent explicit construction of solutions to $p(X)=A$ over extensions of $\mathbb Q$ by
Groenewald et al.\ \cite{Groenewald2022}.  All of these fix a coefficient matrix and study which
matrices satisfy a polynomial constraint, typically restricting to commuting solutions or to
structural properties of the solution variety.  The present paper inverts this perspective: $X$
is the input, $A=f(X)$ is determined, the fiber $\{Y:Y^2=A\}$ is the object of study, and the
question is whether the entire fiber lies in $Z(X)$.  Theorem~\ref{thm:general-field} extends the
conclusion to general fields, complementing the algebraically closed centralizer setting of
Thompson, and the constructive noncommuting square roots produced here are not directly available
from the centralizer-equality literature.

\subsection*{Adjacent equations, matrix points, and Jordan strata}
There is related work on commuting and noncommuting solutions of other quadratic matrix equations,
including Yang--Baxter type equations and $AXA=XAX$
\cite{DongDingQuadratic,LuYangBaxterSingular}, which typically fix a coefficient matrix and study
a different equation under Jordan-form, rank, or spectral hypotheses on that matrix.  Neighboring
arithmetic work counts matrix points on algebraic varieties over finite fields
\cite{AllenDavidLennonOnoSaad,BlaserBradleyVargasXing}. In the elliptic-curve case this literature
counts commuting matrix analogues of $B^2=A(A-I_n)(A-aI_n)$ with an explicit commutativity
condition on the coordinates, whereas this paper does not impose $[X,Y]=0$ and instead
studies when the equation itself forces commutativity, with a finite-field variant asking for the
noncommuting excess $N_m^{\mathrm{nc}}(q)$.  Finally, the Jordan-type stratification of nilpotent
centralizers is well developed
\cite{BoijIarrobinoKhatamiJordanStrata,IarrobinoKhatamiVanSteirteghemZhaoNilpotentTypes}.  Those
works study possible and generic Jordan types inside nilpotent centralizers as intrinsic invariants
of a single nilpotent operator, whereas the colored zero partitions used here are extrinsic data
attached to the pair $(X,f)$: each block of the nilpotent operator $f(X)|_{V_0}$ carries the label
of the $X$-primary sector $V_{r_i}$ that produced it, and the labeling is determined by the
polynomial $f$ via its root data, not by the nilpotent operator $f(X)|_{V_0}$ alone.

\subsection*{Precise contribution}
The result is best viewed as a relative classification over the polynomial map $X\mapsto f(X)$.  It
is not a new criterion for square roots of a single matrix, nor a count of commuting matrix points,
nor a general representation-variety classification.  Its content is that the full fiber
$\{Y:Y^2=f(X)\}$ is contained in $Z(X)$ exactly in the polynomial-recovery case, and that the proof
gives explicit witnesses outside that case.

\section{Examples}\label{sec:examples}

The first example illustrates the recovery side of the theorem.  The remaining examples show the
basic ways in which polynomial recovery can fail and a noncommuting square root can appear: nonzero
spectral collision, singular cross-root collision, critical nilpotent collapse, and pure inflection
collapse.

\begin{example}[A forced-commutativity example]\label{ex:forced-commutativity}
Let $f(t)=t^3+1$ and let
\[
X=\begin{pmatrix}1&1\\0&1\end{pmatrix}.
\]
Write $N=X-I$, so $N^2=0$.  Since $\operatorname{char} k\ne2,3$, the values $f(1)=2$ and $f'(1)=3$
are nonzero.  If $A=f(X)$, then
\[
A=2I+3N,
\qquad
X=I+\frac{1}{3}(A-2I)\in k[A].
\]
The matrix $A$ is invertible, so the fiber $\{Y:Y^2=A\}$ is nonempty.  Therefore every solution of
$Y^2=f(X)$ commutes with $X$, by Theorem~\ref{thm:fiberwise-forced-commutativity-intro}.
\end{example}

\begin{example}[Nonzero spectral-collision $2\times 2$ example]\label{ex:nonzero-spectral-collision-2x2}
Let $f(t)=t^3-3t$ and
\[
X=\begin{pmatrix}-1&0\\0&2\end{pmatrix}.
\]
Then $f(-1)=f(2)=2$, so $f(X)=2I$.  Choose $\alpha$ with $\alpha^2=2$ and set
\[
Y=\alpha\begin{pmatrix}0&1\\1&0\end{pmatrix}.
\]
Then $Y^2=f(X)$, but $[X,Y]\ne 0$.  This is the basic nonzero spectral-collision mechanism.
\end{example}

\begin{example}[Singular cross-root $2\times 2$ example]\label{ex:singular-cross-root-2x2}
Let $f(t)=t^3-t$ and
\[
X=\begin{pmatrix}0&0\\0&1\end{pmatrix},\qquad
Y=\begin{pmatrix}0&1\\0&0\end{pmatrix}.
\]
Then $f(X)=0$, $Y^2=0$, and $[X,Y]\ne 0$.  This is the smallest cross-root singular mechanism: two
distinct roots of $f$ collapse to the same zero value of $f(X)$.
\end{example}

\begin{example}[Simple critical-collapse, $d_\lambda=2$]\label{ex:simple-critical-collapse}
Let $f(t)=t^3-3t$.  Then $f'(t)=3(t^2-1)$ has simple roots at $t=\pm 1$, with $f''(\pm 1)=\pm 6\ne 0$,
so both critical points are simple.  Take $\lambda=1$, so $f(1)=-2$, $f'(1)=0$, $f''(1)=6$.  Let
\[
X=\begin{pmatrix}1&1\\0&1\end{pmatrix}=I+N,\qquad N=\begin{pmatrix}0&1\\0&0\end{pmatrix},\quad N^2=0.
\]
Taylor expansion gives $f(I+N)=f(1)I+f'(1)N+\tfrac{f''(1)}{2}N^2=-2I_2$, using $N^2=0$.  Choose
$\alpha\in k$ with $\alpha^2=-2$ and set
\[
Y=\alpha\begin{pmatrix}0&1\\1&0\end{pmatrix}.
\]
Then $Y^2=\alpha^2 I_2=-2I_2=f(X)$, and a direct calculation gives
\[
[X,Y]=\begin{pmatrix}\alpha&0\\0&-\alpha\end{pmatrix}\ne 0.
\]
This activates the critical-collapse mechanism with $d_\lambda=2$: the spectrum is
$\Spec(X)=\{1\}$, $f$ vanishes to order $2$ in $f(t)-f(1)=(t-1)^2(t+2)$ at $t=1$, no spectral
collision is available (only one eigenvalue), and the singular zero-primary fiber is empty.  By
Lemma~\ref{lem:critical-collapse-idempotents}, the construction realizes the idempotent twist on
the nonzero $A$-primary component at $f(1)=-2$.
\end{example}

\begin{example}[Pure inflection collapse]\label{ex:pure-inflection-collapse}
Let $f(t)=t^3+1$ and $X=J_3(0)$.  Then $f(X)=I$.  The diagonal involution
\[
Y=\operatorname{diag}(1,-1,1)
\]
satisfies $Y^2=I=f(X)$ and does not commute with $X$.  This is a pure polynomial-recovery failure:
$f(X)$ is scalar while $X$ retains nilpotent data.
\end{example}

\begin{example}[A $5\times5$ nonzero spectral collision]\label{ex:5x5-nonzero-spectral-collision}
Let $f(t)=t^3+1$, let $k$ be algebraically closed with $\operatorname{char} k\ne 2,3$, and let
$\omega\in k$ be a primitive cube root of unity.  Set
\[
X=\operatorname{diag}(1,\,1,\,\omega,\,\omega^2,\,\omega^2)\in M_5(k).
\]
The spectrum is $\Spec(X)=\{1,\omega,\omega^2\}$.  Since $1^3=\omega^3=(\omega^2)^3=1$, one has
$f(\lambda)=2$ for every $\lambda\in\Spec(X)$, hence
\[
A=f(X)=2I_5,
\]
a single $A$-primary component at the nonzero value $2$.  Lemma~\ref{lem:poly-square-root} gives a
polynomial square root, so the fiber $\{Y\in M_5(k):Y^2=A\}$ is nonempty.  Since
$f|_{\Spec(X)}$ is constant but $X$ is not scalar, $X\notin k[A]$ by
Lemma~\ref{lem:polynomial-recovery-dichotomy}.  The main theorem therefore predicts a noncommuting
square root.

An explicit witness is constructed as follows.  Let $\alpha\in k$ satisfy $\alpha^2=2$ and let
$Q\in M_5(k)$ be the permutation matrix exchanging the basis vectors $e_1$ and $e_3$ and fixing
$e_2$, $e_4$, $e_5$.  Then $Q^2=I_5$ and $Q\notin Z(X)$.  Put $Y=\alpha Q$.  Then
\[
Y^2=\alpha^2 Q^2=2 I_5=A.
\]
To verify $[X,Y]\ne 0$, compute
\[
XQ(e_1)=X(e_3)=\omega\,e_3,\qquad QX(e_1)=Q(e_1)=e_3,
\]
so $[X,Y](e_1)=\alpha(\omega-1)e_3\ne 0$.  Conceptually, $Y=\alpha Q$ is the spectral-collision
idempotent twist (Lemma~\ref{lem:spectral-collision-idempotents}) applied to the pair of distinct
eigenvalues $1,\omega\in\Spec(X)$ inside the single $A$-primary component at the value $2$: the
idempotent $P=(I_5+Q)/2$ lies in $Z(A)\setminus Z(X)$, and the twist $R=2P-I_5=Q$ produces the
required noncommuting square root by Lemma~\ref{lem:idempotent-twist} with $G=\alpha I_5\in k[A]$.
The zero-primary component of $A$ is empty, so the witness is purely of
nonzero-spectral-collision type, in contrast with the $2\times 2$ singular cross-root
Example~\ref{ex:singular-cross-root-2x2}.
\end{example}

\begin{example}[A compound example: critical collapse and spectral collision coexist]\label{ex:compound-critical-and-collision}
Let $f(t)=t^3-3t$ and
\[
X=\begin{pmatrix}1&1&0\\0&1&0\\0&0&-2\end{pmatrix}=J_2(1)\oplus J_1(-2)\in M_3(k).
\]
The spectrum is $\Spec(X)=\{1,-2\}$ and $f(t)-f(1)=(t-1)^2(t+2)$, so $\lambda=1$ is a simple critical point with $d_1=2$.  Writing $X|_{V_1}=I+N$ with $N=\begin{psmallmatrix}0&1\\0&0\end{psmallmatrix}$ and $N^2=0$, Taylor expansion gives $f(I+N)=-2I_2$.  On the eigenspace $V_{-2}$, the operator $X$ acts as the scalar $-2$, with $f(-2)=-2=f(1)$.  Therefore
\[
A=f(X)=-2 I_3.
\]
The analysis identifies two distinct mechanisms in $X$:
\begin{enumerate}
\item[(i)] critical collapse at $\lambda=1$ with $d_1=2$, since $f'(1)=0$ and $X|_{V_1}$ is a $J_2$-block (Lemma~\ref{lem:critical-collapse-idempotents}),
\item[(ii)] spectral collision between $\lambda=1$ and $\lambda=-2$, since $f(1)=f(-2)$ and the $A$-Jordan structure on $V_{-2}$ is a size-$1$ block compatible with the $(1,1)$ component of $A|_{V_1}$ (Lemma~\ref{lem:spectral-collision-idempotents}).
\end{enumerate}
Because $A=-2I_3$ is scalar but $X$ is not, polynomial recovery fails (Lemma~\ref{lem:polynomial-recovery-dichotomy}), and the main theorem predicts a noncommuting square root.  Each mechanism produces its own witness.

Let $\alpha\in k$ satisfy $\alpha^2=-2$. The critical-collapse witness, supported on $V_1$, is
\[
Y_{\mathrm{cc}}=\begin{pmatrix}0&\alpha&0\\\alpha&0&0\\0&0&\alpha\end{pmatrix},\qquad Y_{\mathrm{cc}}^2=-2I_3=A,\qquad [X,Y_{\mathrm{cc}}]=\begin{pmatrix}\alpha&0&0\\0&-\alpha&0\\0&0&0\end{pmatrix}\ne 0.
\]
The spectral-collision witness, mixing $V_1$ and $V_{-2}$, is
\[
Y_{\mathrm{sc}}=\begin{pmatrix}0&0&\alpha\\0&\alpha&0\\\alpha&0&0\end{pmatrix},\qquad Y_{\mathrm{sc}}^2=-2I_3=A,\qquad [X,Y_{\mathrm{sc}}]=\begin{pmatrix}0&\alpha&3\alpha\\0&0&0\\-3\alpha&-\alpha&0\end{pmatrix}\ne 0.
\]
The witness $Y_{\mathrm{cc}}$ is the idempotent twist of Lemma~\ref{lem:critical-collapse-idempotents} applied at $\lambda=1$, extended by a commuting square root on $V_{-2}$.  The witness $Y_{\mathrm{sc}}$ is the idempotent twist of Lemma~\ref{lem:spectral-collision-idempotents} applied to the pair $\{1,-2\}$, extended by a commuting square root on a chosen complementary line in $V_1$.  Both witnesses certify the same forced-commutativity failure but arise from distinct lemmas of the paper. This example shows that the classification cleanly separates the two mechanisms even when they coexist in the same $X$.
\end{example}

\end{document}